\newtheorem{Th}{Theorem}[section]
\newtheorem{Prop}[Th]{Proposition}
\newtheorem{Lm}[Th]{Lemma}
\theoremstyle{definition}
\newtheorem{Def}[Th]{Definition}
\newtheorem{Ques}[Th]{Question}
\newtheorem{Rem}{Remark}[section]
\title{$\mathrm{GL}_n$-structure and principal $\mathfrak{sl}_2$-triple on the cohomology ring of complex Grassmannian}
\author{Nhok Tkhai Shon Ngo}
\address{V.N.Karazin Kharkiv National University\\
	Svobody Square 4\\
	Kharkiv\\
	61022\\
	Ukraine}
\email{ngothaison17@gmail.com}
\date{\today}
\def\ad{\operatorname{ad}}
\def\tr{\operatorname{tr}}
\def\Adj{\operatorname{Adj}}
\def\Gr{\operatorname{Gr}}
\numberwithin{equation}{section}
\newcommand{\id}{\operatorname{id}}
\begin{document}
	
	\maketitle
	
	\begin{abstract}
		In this note we describe the cohomology ring of the Grassmannian of $k$-planes in $n$-dimensional complex vector space as an $\mathrm{GL}_n$-module. We give explicit formulas for the operators of its principal $\mathfrak{sl}_2$-triple. It is proved that one of these operators corresponds to the shifted cohomology degree operator and the second operator coincides with the Lefschetz map on cohomology (as in the hard Lefschetz theorem). We check that the cohomology ring of the complex Grassmannian as a $\mathrm{GL}_n$-representation is isomorphic to the $k$-th exterior power of the standard $n$-dimensional representation. 
	\end{abstract}
	
	\section{Introduction}
	
	\subsection{Cohomology ring.} 
	Fix positive integers $n$ and $k$ such that $1\le k\le n-1$. Consider the corresponding complex Grassmannian $\Gr(k,n)$ of $k$-dimensional subspaces in a $n$-dimensional complex vector space which we will denote as $W$. We will consider the cohomology ring $H^*(\Gr(k,n),\mathbb{C})$ of $\Gr(k,n)$ with complex coefficients. Since all odd cohomology groups of the $\Gr(k,n)$ are trivial its cohomology ring is commutative.

	Recall that over $\Gr(k,n)$ there are three canonical vector bundles: the \emph{trivial bundle} $T=\Gr(k,n)\times\mathbb{C}^n$ of rank $n$, the \emph{universal subbundle} $S$ of rank $k$, whose fiber at each point $\Lambda$ is the $k$-plane $\Lambda$ itself and the \emph{universal quotient bundle} $Q$ of rank $n-k$ defined as quotient $Q=S/T$.
	
	Then, for the total Chern classes $c(S)$, $c(Q)$ and $c(T)$ we have the following equalities:
	\begin{equation}
		\begin{split}
			& c(S)=c_0(S)+c_1(S)+\ldots+c_k(S),
			\\
			& c(Q)=c_0(Q)+c_1(Q)+\ldots+c_{n-k}(Q),
			\\
			& c(T)=c(S)c(Q)=1.
		\end{split}
	\end{equation}
	Note that here $c_0(S)=c_0(Q)=1$ and $c(T)=c_0(T)=1$.
	Moreover, it is known that in terms of Chern classes of bundles $S$ and $Q$ the cohomology ring $H^*(\Gr(k,n),\mathbb{C})$ can be described in the following way (see \cite[Proposition 23.2]{Bott_Tu}).
	
	\begin{Prop}
		As a ring, $H^*(\Gr(k,n),\mathbb{C})$ is isomorphic to the quotient algebra
		\begin{equation}
			\mathbb{C}[c_1(S),\ldots,c_k(S),c_1(Q),\ldots,c_{n-k}(Q)]/(c(S)c(Q)-1).
		\end{equation}
	\end{Prop}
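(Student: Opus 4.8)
The plan is to exhibit a natural graded ring homomorphism $\phi$ from the quotient algebra onto $H^*(\Gr(k,n),\mathbb{C})$ and then prove it is an isomorphism by comparing dimensions. The homomorphism is forced by the tautological exact sequence $0\to S\to T\to Q\to 0$: the Whitney sum formula gives $c(S)c(Q)=c(T)=1$ in the cohomology ring (since $T$ is trivial), so sending the formal generators $c_i(S),c_j(Q)$ to the genuine Chern classes annihilates the element $c(S)c(Q)-1$ and hence descends to a well-defined map $\phi$ on the quotient.

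For surjectivity I would invoke the Schubert cell decomposition of $\Gr(k,n)$: the cells are indexed by partitions $\lambda$ fitting inside the $k\times(n-k)$ rectangle, they all lie in even real dimension, and the Poincaré duals of the corresponding Schubert varieties form a $\mathbb{C}$-basis of $H^*(\Gr(k,n),\mathbb{C})$, of total rank $\binom{n}{k}$. The special Schubert classes (one-row $\lambda$) coincide with the Chern classes $c_j(Q)$, and the Giambelli determinantal formula writes an arbitrary Schubert class as a polynomial in these; hence the Chern classes generate the cohomology ring and $\phi$ is onto.

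For injectivity I would carry out a dimension count on the source. The homogeneous components of $c(S)c(Q)-1$ in degrees $1,\dots,n-k$ solve recursively for $c_1(Q),\dots,c_{n-k}(Q)$ as polynomials in $c_1(S),\dots,c_k(S)$, so the quotient algebra is isomorphic to $\mathbb{C}[c_1(S),\dots,c_k(S)]/J$, where $J$ is generated by the remaining $k$ components, in degrees $n-k+1,\dots,n$. Writing $c_i(S)=e_i(x_1,\dots,x_k)$ for Chern roots $x_1,\dots,x_k$, one has $c_j(Q)=(-1)^j h_j$, and using the identity $\sum_{i+j=m}(-1)^{j}e_i h_j=0$ relating elementary and complete homogeneous symmetric functions one checks (by upper-triangular induction starting from degree $n-k+1$) that $J$ is exactly the ideal $(h_{n-k+1},\dots,h_n)$. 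Thus the source is the classical presentation $\mathbb{C}[e_1,\dots,e_k]/(h_{n-k+1},\dots,h_n)$.

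The main obstacle is then showing this quotient has dimension exactly $\binom{n}{k}$, matching the Schubert count. I would prove that the Schur polynomials $s_\lambda(x_1,\dots,x_k)$ indexed by partitions $\lambda\subseteq(n-k)^k$ descend to a basis: the Jacobi--Trudi formula expresses each $s_\lambda$ through the $h_j$, the relations $h_{n-k+1}=\dots=h_n=0$ together with the Pieri rule let one rewrite any $s_\lambda$ with more than $k$ rows or with $\lambda_1>n-k$ as a combination of box-partition ones (or as zero), and a triangularity argument gives linear independence of the box-partition Schur polynomials modulo $J$. Since there are exactly $\binom{n}{k}$ partitions inside the rectangle, the two dimensions agree and the surjection $\phi$ is forced to be an isomorphism.
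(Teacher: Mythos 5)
Your proposal is correct in substance, but note that the paper itself contains no proof of this proposition: it is quoted from Bott--Tu \cite[Proposition 23.2]{Bott_Tu}, so there is no internal argument to compare against. The Bott--Tu proof proceeds by fibering flag bundles over the Grassmannian and comparing Poincar\'e series via the splitting principle; your route --- the Whitney formula $c(S)c(Q)=c(T)=1$ for well-definedness, the Schubert-cell basis plus Giambelli for surjectivity, elimination of the $c_j(Q)$ to reach the classical presentation $\mathbb{C}[e_1,\ldots,e_k]/(h_{n-k+1},\ldots,h_n)$, and a dimension count against $\binom{n}{k}$ --- is the standard Schubert-calculus alternative. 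It arguably fits this paper better than the cited proof, since Section 3 already runs on exactly this machinery (Proposition \ref{symm_func_isom}, the correspondence \eqref{chern_schubert}, Pieri and Jacobi--Trudi), and your elimination step reproduces precisely the paper's ideal $I=(R_1,\ldots,R_n)$ with $R_m=\sum_{r+s=m}p_rq_s$.

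Two points deserve attention. First, the ideal $(c(S)c(Q)-1)$ must be read as generated by the \emph{homogeneous components} $R_1,\ldots,R_n$, which is how the paper defines $I$: the ideal generated by the single inhomogeneous element is strictly smaller and yields an infinite-dimensional quotient (already for $n=2$, $k=1$ the ring $\mathbb{C}[p_1,q_1]/(p_1+q_1+p_1q_1)$ is the coordinate ring of a curve). Your well-definedness argument kills only the single element, while your injectivity count silently uses the components one degree at a time; the bridge is that the evaluation map is graded, so annihilating the sum annihilates each component --- say this explicitly. Second, your concluding ``triangularity argument'' for linear independence of the box-partition Schur polynomials modulo $J$ is the one vague step (triangular with respect to what order, modulo an ideal for which you have no normal form yet?), but it is also unnecessary: spanning alone gives $\dim\le\binom{n}{k}$ for the source, while your surjection $\phi$ onto the $\binom{n}{k}$-dimensional cohomology gives $\dim\ge\binom{n}{k}$, so equality holds, $\phi$ is injective, and independence drops out a posteriori. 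In the spanning step, make sure the induction yields $h_m\equiv 0\bmod J$ for \emph{all} $m>n-k$, including $m>n$, via the recursion $h_m=\sum_{i=1}^{k}(-1)^{i-1}e_ih_{m-i}$; this is what annihilates the entire first row of the Jacobi--Trudi determinant whenever $\lambda_1>n-k$, no matter how large $\lambda_1$ is.
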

	
	Denote for brevity $p_j=c_j(S)$ for $1\le j\le k$ and $q_l=c_l(Q)$ for $1\le l\le n-k$. We also let $p_0=q_0=1$, $p_j=0$ if $j>k$ or $j<0$ and $q_l=0$ if $l>n-k$ or $l<0$. Then, the proposition above states that cohomology ring of the complex Grassmannian is isomorphic to the quotient algebra $\mathcal{A}=\mathbb{C}[p_1,\ldots,p_k,q_1,\ldots,q_{n-k}]/I$. Here $I$ is the ideal in the polynomial algebra $\mathbb{C}[p_1,\ldots,p_k,q_1,\ldots,q_{n-k}]$ generated by elements $R_m$, which are defined via the polynomial identity
	\begin{equation}
		x^n+R_1x^{n-1}+\ldots+R_{n-1}x+R_{n}=(x^k+p_1x^{k-1}+\ldots+p_k)(x^{n-k}+q_1x^{n-k-1}+\ldots+q_{n-k}),
	\end{equation}
	or explicitly as
	\begin{equation*}
		R_m=\sum_{r+s=m}p_rq_s~\text{for}~1\le m\le n.
	\end{equation*}
	\begin{Rem}
		Cohomological degree on $H^*(\Gr(k,n),\mathbb{C})$ produces a grading on algebra $\mathcal{A}$. Since the Chern classes $p_j=c_j(S)$ and $q_j=c_j(Q)$ have cohomological degree $2j$ the degree of a monomial $a=\prod_{j}p_j^{\alpha_j}\prod_{l}q_{l}^{\beta_l}$ in $\mathcal{A}$ equals 
		\begin{equation}\label{degree}
			\deg(a)=2\sum_{j=1}^{k}j\alpha_j+2\sum_{l=1}^{n-k}l\beta_l.
		\end{equation}
	\end{Rem}

	\subsection{$\mathbf{GL_n(\mathbb{C})}$-structure.} 
	According to the Geometric Satake isomorphism of \cite{Mirkovic-Vilonen} there is a canonical isomorphism between the monoidal categories of irreducible representations of a complex reductive group and  of perverse sheaves on the affine Grassmanian for the Langlands dual group. A refined version \cite[7.4 Corollary]{Mirkovic-Vilonen} identifies the weight space decomposition of a maximal torus on the irreducible representation with the decomposition of the intersection cohomology of the matching affine Schubert cell in the affine Grassmannian into the classes of certain Mirkovic-Vilonen cycles.  In the case of $\mathrm{GL}_n$ and its $k$-th fundamental representation on $\bigwedge^k \mathbb{C}^n$ the corresponding affine Schubert cell is our classical Grassmanian $\Gr(k,n)$. The Mirkovic-Vilonen cycles correspond to the classical Schubert cells in $\Gr(k,n)$. \cite[7.4 Corollary]{Mirkovic-Vilonen} in this case implies that the canonical isomorphism $H^*(\Gr(k,n),\mathbb{C})\cong\bigwedge^k \mathbb{C}^n$ matches the cohomology classes of the classical Schubert cells with pure $k$-forms  in $\bigwedge^k \mathbb{C}^n$ as explained in Subsection 3.2. Namely, this isomorphism sends the Schubert class $\sigma_{\lambda}\in H^*(\Gr(k,n),\mathbb{C})$ to the basis element $\bigwedge_{j=1}^{k}e_{\lambda_j+k-j}$ of $\bigwedge^k \mathbb{C}^n$, where $\lambda$ is a partition of the length at most $k$ and whose parts are at most $n-k$ and $\{e_{j}\}_{j=0}^{n-1}$ is a basis of $\mathbb{C}^n$. We review this isomorphism in Section 3 of this paper.
	
	In particular, we can regard the cohomology ring of the complex Grassmannian as a $\mathrm{GL}_n$-representation.
	\begin{Prop}\label{cohomology_gl_n_rep}
		The cohomology ring of the complex Grassmannian can be endowed with the structure of a $\mathrm{GL}_n$-module and as a $\mathrm{GL}_{n}$-representation it is isomorphic to the $k$-th exterior power of the standard $n$-dimensional $\mathrm{GL}_n$-representation.
	\end{Prop}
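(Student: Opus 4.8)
The plan is to exhibit the claimed isomorphism on the level of distinguished bases and then to observe that the $\mathrm{GL}_n$-action is obtained from it by transport of structure, so that equivariance becomes automatic. First I would recall the Schubert basis: the Schubert classes $\sigma_\lambda$, indexed by partitions $\lambda=(\lambda_1\ge\cdots\ge\lambda_k\ge 0)$ with $\lambda_1\le n-k$, i.e.\ those whose Young diagram fits inside the $k\times(n-k)$ rectangle, form a $\mathbb{C}$-basis of $H^*(\Gr(k,n),\mathbb{C})$. On the other side the decomposable wedges $e_{i_1}\wedge\cdots\wedge e_{i_k}$ with $0\le i_1<\cdots<i_k\le n-1$ form the standard basis of $\bigwedge^k\mathbb{C}^n$. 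Both index sets are counted by $\binom{n}{k}$, so the two spaces have equal dimension.

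The key step is the combinatorial matching $\lambda\mapsto\{\lambda_j+k-j:1\le j\le k\}$ already recorded in the statement of the Satake correspondence. Because $\lambda_j\ge\lambda_{j+1}$, the integers $\lambda_j+k-j$ are strictly decreasing in $j$, and they all lie in $\{0,1,\dots,n-1\}$ since $0\le\lambda_k\le\lambda_j+k-j\le\lambda_1+k-1\le n-1$; hence each partition in the rectangle produces a distinct strictly decreasing $k$-tuple of indices, and conversely every strictly increasing $k$-subset of $\{0,\dots,n-1\}$ arises from a unique such partition. This gives a bijection between the two bases, and therefore a $\mathbb{C}$-linear isomorphism $\phi\colon H^*(\Gr(k,n),\mathbb{C})\xrightarrow{\ \sim\ }\bigwedge^k\mathbb{C}^n$ sending $\sigma_\lambda$ to $\bigwedge_{j=1}^k e_{\lambda_j+k-j}$.

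Finally I would define the $\mathrm{GL}_n$-module structure on $H^*(\Gr(k,n),\mathbb{C})$ to be the pullback through $\phi$ of the natural action of $\mathrm{GL}_n$ on $\bigwedge^k\mathbb{C}^n$; with this definition $\phi$ is tautologically an isomorphism of $\mathrm{GL}_n$-representations. The mathematical content — and the only real obstacle — is therefore not the formal transport but the justification that $\phi$ is genuinely the canonical map supplied by the Geometric Satake / Mirkovic--Vilonen correspondence rather than an ad hoc identification of bases; this is exactly what the review in Section~3 provides, where the matching of Mirkovic--Vilonen cycles with classical Schubert cells pins down $\phi$ intrinsically. Once that canonicity is granted, the proposition follows immediately.
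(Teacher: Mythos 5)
Your proposal is correct and takes essentially the same route as the paper: the paper likewise defines the $\mathrm{GL}_n$-structure on $H^*(\Gr(k,n),\mathbb{C})$ by transporting the natural action on $\bigwedge^k\mathbb{C}^n$ through the basis bijection $\sigma_\lambda\mapsto\bigwedge_{j=1}^{k}e_{\lambda_j+k-j}$ of Subsection~3.2, and its own Remark following the proposition notes that in this formulation only the dimension count $\binom{n}{k}$ (equivalently, the bijection of bases) is needed, with the Mirkovic--Vilonen correspondence supplying canonicity rather than the formal proof. The only blemish is cosmetic: in your converse direction ``strictly increasing $k$-subset'' should be phrased as the subset underlying a strictly decreasing $k$-tuple, which does not affect the argument.
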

	
	\begin{Rem}
		Of course, in order to prove the proposition in this formulation we only need to check that $H^*(\Gr(k,n),\mathbb{C})$ as a vector space has the dimension $\binom{n}{k}$ (the dimension of the $k$-th exterior power of an $n$-dimensional vector space). However, we will show that there are some connections between the inner structure of the cohomology ring and the $\mathrm{GL}_n$-structure defined in Subsection 3.2. Specifically, we will show that the Dynkin's grading (see Subsection 1.3 and Proposition \ref{HF_action_cohomology}) on the weight spaces corresponds to the cohomological degree.
	\end{Rem}
	
	\subsection{Dynkin's grading and principal $\mathfrak{sl}_2$-triples.} Dynkin in his paper \cite{Dynkin} considered irreducible representations of semisimple Lie groups and proved several properties of their weight systems. In that paper he also introduced what is now called `a principal $\mathfrak{sl}_2$-triple'. In this paragraph, we will give a short overview of Dynkin's results from \cite{Dynkin} which will be used in the sequel.
	
	Let $\mathfrak{g}$ be a complex semisimple Lie algebra and let $\mathfrak{h}$ be its Cartan subalgebra. Denote the set of simple roots of the Lie algebra $\mathfrak{g}$ by $S$ and the Killing form by $B(\cdot,\cdot)$.  
	
	\begin{Def}
		An \emph{$\mathfrak{sl}_2$-triple} in the Lie algebra $\mathfrak{g}$ is a triple $(e,f,h)$ of elements of $\mathfrak{g}$ which satisfy the following commutation relations:
		\begin{equation}\label{sl2_rel}
			[h,e]=2e,~[h,f]=-2f,~[e,f]=h.
		\end{equation}
		In other words, elements $e$, $f$ and $h$ span a representation of $\mathfrak{sl}_2$ in $\mathfrak{g}$.
	\end{Def}
	Elements $e$ and $f$ are often referred to as \emph{nilpotent} while operator $h$ is often referred to as \emph{semisimple}. These names are justified by the fact that in every $\mathfrak{sl}_{2}$-triple as linear operators $e$ and $f$ are nilpotent and operator $h$ is semisimple. The latter is one of consequences of the representation theory of $\mathfrak{sl}_2$. 
	
	\begin{Def}
		An $\mathfrak{sl}_2$-triple is called \emph{principal} if its elements are regular elements of the Lie algebra $\mathfrak{g}$, that is, their centralizers has the least possible dimension (see \cite[Appendix D, Def. D.2]{Fulton-Harris}). The corresponding three-dimensional Lie subalgebra spanned by the elements of the principal $\mathfrak{sl}_2$-triple is called \emph{principal sublagebra}.
	\end{Def}
	
	Let $\mathbb{V}_{\lambda}$ be the irreducible representation of $\mathfrak{g}$ with the highest weight $\lambda$ and $\rho\colon\mathfrak{g}\to\mathfrak{gl}(\mathbb{V}_{\lambda})$ be the corresponding homomorphism. Dynkin proved that every weight $\mu$ of the representation $\mathbb{V}_{\lambda}$ can be represented as
	\begin{equation}\label{weight_level}
		\mu=\lambda-\alpha_1-\ldots-\alpha_k,~\text{where}~\alpha_1,\ldots,\alpha_k\in S.  
	\end{equation}
	Moreover, for any such decomposition the number $k$ is the same. Following Dynkin, we will say that weight $\mu$ is a weight of the level $k$ if the equality \eqref{weight_level} holds for some $\alpha_1,\ldots,\alpha_k\in S$.
	
	Dynkin constructed in $\mathfrak{g}$ three special elements $e$, $f$ and $h$, where $h\in\mathfrak{h}$, which after multiplying by some constants satisfy the following relations:
	\begin{equation}\label{dynkin_cond_sl2}
		[h,e]=2e,~[h,f]=-2f,~[e,f]=h
	\end{equation}
	and also 
	\begin{equation}\label{dynkin_cond_proj}
		\alpha(h)=2~\text{for all}~\alpha\in S.
	\end{equation}
	The explicit construction of $e$, $f$ and $h$ is discussed in more detail in the Section 2 of this note. Note that triple $\{e,f,h\}$ is not defined uniquely by these conditions (see Section 2 and Remark \ref{dynkin_nonuniq}).
	
	In particular, triple $\{e,f,h\}$ spans a three-dimensional subalgebra of $\mathfrak{g}$ and form a principal $\mathfrak{sl}_2$-triple. Denote this subalgebra by $\mathfrak{s}$ and note that $\mathfrak{s}$ is isomorphic to $\mathfrak{sl}_2$. It is clear that $\mathbb{V}_{\lambda}$ can be also considered as a representation of the Lie algebra $\mathfrak{s}$.
	
	The connection between weight levels and triple $\{e,f,h\}$ is described by the following observation: the level $k$ of a weight $\mu$ in $\mathbb{V}_{\lambda}$ can be computed as
	\begin{equation}
		k=\frac{\lambda(h)-\mu(h)}{2}.
	\end{equation}
	In other words, the element $h$ produces a certain grading on the set of weights of $\mathbb{V}_{\lambda}$. Moreover, it turns out that this grading corresponds exactly to the weight decomposition of $\mathbb{V}_{\lambda}$ as an $\mathfrak{s}$-representation. Namely, consider decomposition of $\mathbb{V}_{\lambda}$ into eigenspaces of $\rho(h)$:
	\begin{equation}
		\mathbb{V}_{\lambda}=\bigoplus_{k=0}^{d}\mathbb{V}_{\lambda,d-2k},
	\end{equation}
	where $\mathbb{V}_{\lambda,r}$ is the $r$-eigenspace of $\rho(h)$, i.e.
	\begin{equation*}
		\mathbb{V}_{\lambda,r}=\ker (\rho(h)-r\cdot\id).
	\end{equation*}
	Dynkin proved that $d=\lambda(h)$ and that $\mathbb{V}_{\lambda,d-2k}$ is a direct sum of the $\mathfrak{g}$-weight spaces of $\mathbb{V}_{\lambda}$ corresponding to the weights of level $k$. 
	
	Besides that, it follows from the representation theory of $\mathfrak{sl}_2$ that eigenspaces $\mathbb{V}_{\lambda,2j-d}$ and $\mathbb{V}_{\lambda,d-2j}$ has the same dimension and moreover, the map $\rho(e)^{2(d-2j)}\colon\mathbb{V}_{\lambda,2j-d}\to\mathbb{V}_{\lambda,d-2j}$ produces a linear isomorphism between these vector spaces. Similarly, the map $\rho(f)^{2(d-2j)}\colon \mathbb{V}_{\lambda,d-2j}\to\mathbb{V}_{\lambda,2j-d}$ is also a linear isomorphism. We will observe similar effects on $H^*(\Gr(k,n),\mathbb{C})$ which are the consequences of the hard Lefschetz theorem (see Remark \ref{lefschetz}).

	\subsection{Main results.} Proposition \ref{cohomology_gl_n_rep} states that $H^*(\Gr(k,n),\mathbb{C})$ is isomorphic as a representation of $\mathrm{GL}_n$ to its $k$-th fundamental representation $\bigwedge^k\mathbb{C}^n$. 
	Recall that any representation of a Lie group produces a representation of the corresponding Lie algebra. Namely, if $G$ is a Lie algebra, $\mathfrak{g}$ is the corresponding Lie algebra and $\rho\colon G\to\mathrm{GL}(V)$ is a representation of $G$, then differential $d\rho\colon\mathfrak{g}\to\mathfrak{gl}(V)$ is a representation of $\mathfrak{g}$. In particular, Proposition \ref{cohomology_gl_n_rep} implies there is an action of the Lie algebra $\mathfrak{gl}_n(\mathbb{C})$ on the cohomology ring $H^*(\Gr(k,n),\mathbb{C})$ of the Grassmannian. Thus, we can also regard $H^*(\Gr(k,n),\mathbb{C})$ as an $\mathfrak{sl}_n(\mathbb{C})$-representation.

	Let $\{e,f,h\}$ be the Dynkin's principal $\mathfrak{sl}_2$-triple for $\mathfrak{sl}_n(\mathbb{C})$ defined as in Subsection 2.3. Denote the operators which correspond to the actions of $e$, $f$ and $h$ on cohomology by $E$, $F$ and $H$, respectively. It turns out that the actions $H$ and $F$ on the cohomology ring have clear geometrical meaning.
	
	\begin{Prop}\label{HF_action_cohomology}
		The actions of the elements $h$ and $f$ of the Dynkin's principal $\mathfrak{sl}_2$-triple of $\mathfrak{sl}_n(\mathbb{C})$ correspond to the operators $H$ and $F$ on the cohomology ring $H^*(\Gr(k,n),\mathbb{C})$ which act as follows:
		\begin{equation}\label{HF_cohomology}
			\begin{split}
				H([\omega])&=(k(n-k)-2j)\cdot [\omega],
				\\
				F([\omega])&=-c_1(S)\cdot[\omega]
			\end{split}
		\end{equation}
		for every cohomology class $[\omega]\in H^{2j}(\Gr(k,n),\mathbb{C})$, where $0\le j\le k(n-k)$. Moreover, Dynkin's grading on $H^*(\Gr(k,n),\mathbb{C})$ considered as an $\mathfrak{sl}_n(\mathbb{C})$-representation corresponds to the cohomological degree.
	\end{Prop}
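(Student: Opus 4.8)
The plan is to transport everything to the exterior power $\bigwedge^k\mathbb{C}^n$ via the explicit isomorphism of Subsection 3.2, under which $\sigma_\lambda\mapsto\bigwedge_{l=1}^k e_{\lambda_l+k-l}$, and then to compute the actions of $h$ and $f$ on basis wedges directly from the explicit form of the Dynkin triple given in Section 2. Since the $\mathfrak{sl}_n(\mathbb{C})$-action on $\bigwedge^k\mathbb{C}^n$ is the natural one, every $X\in\mathfrak{sl}_n(\mathbb{C})$ acts as a derivation
\[
X(e_{j_1}\wedge\cdots\wedge e_{j_k})=\sum_{l=1}^k e_{j_1}\wedge\cdots\wedge Xe_{j_l}\wedge\cdots\wedge e_{j_k},
\]
so once I know how $h$ and $f$ act on the standard basis $\{e_m\}_{m=0}^{n-1}$ of $\mathbb{C}^n$, the computation on wedges becomes purely combinatorial.

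For $H$, first I would pin down $h$ from the defining property $\alpha(h)=2$ for every simple root. Writing $h=\operatorname{diag}(h_1,\dots,h_n)$ and using the standard simple roots $\alpha_i=\epsilon_i-\epsilon_{i+1}$ together with tracelessness forces $h=\operatorname{diag}(n-1,n-3,\dots,1-n)$, i.e. $h\,e_m=(n-1-2m)e_m$. By the derivation formula the eigenvalue of $h$ on $e_{j_1}\wedge\cdots\wedge e_{j_k}$ is $\sum_{l}(n-1-2j_l)=k(n-1)-2\sum_l j_l$. Substituting $j_l=\lambda_l+k-l$ gives $\sum_l j_l=|\lambda|+\tfrac{k(k-1)}{2}$, whence the eigenvalue simplifies to $k(n-k)-2|\lambda|$. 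Since $\sigma_\lambda$ lives in cohomological degree $2|\lambda|$, this equals $k(n-k)-2j$ on $H^{2j}$, which is the asserted formula for $H$. The same computation identifies Dynkin's grading (the level, equal to $(d-\text{eigenvalue})/2$ with $d=k(n-k)$) with $|\lambda|=j$, that is, with half the cohomological degree, which is the final assertion.

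For $F$, in the normalization of the Dynkin triple fixed in Section 2 the element $f$ is the sum of the negative simple root vectors, acting on the standard basis by $f\,e_m=e_{m+1}$ for $0\le m<n-1$ and $f\,e_{n-1}=0$. Applying the derivation formula, $f$ sends $e_{j_1}\wedge\cdots\wedge e_{j_k}$ to the sum over $l$ of the wedges obtained by replacing $j_l$ with $j_l+1$; a summand vanishes precisely when $j_l=n-1$ or when $j_l+1$ already occurs among the other indices. Translating through $j_l=\lambda_l+k-l$, the surviving summands correspond exactly to the partitions obtained from $\lambda$ by adding one box inside the $k\times(n-k)$ rectangle, and one checks that no reordering of the wedge is required, so every surviving coefficient is $+1$. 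This is precisely the Pieri rule $\sigma_1\cdot\sigma_\lambda=\sum_\mu\sigma_\mu$, so $F$ is multiplication by $\sigma_1$. Finally, $c(S)c(Q)=1$ gives $c_1(S)+c_1(Q)=0$, and since $\sigma_1=c_1(Q)$ we conclude $\sigma_1=-c_1(S)$, i.e. $F([\omega])=-c_1(S)\cdot[\omega]$.

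The weight computation for $H$ is routine linear algebra. The main obstacle is the computation of $F$: it hinges on (i) fixing the correct normalization of the Dynkin triple so that $f$ acts on $\mathbb{C}^n$ as the \emph{unit} lowering operator $e_m\mapsto e_{m+1}$ — any rescaling of $f$ would introduce weighted coefficients and fail to reproduce $\sigma_1$ exactly — and (ii) a careful bookkeeping of the degenerate cases (reaching the top index, index collisions) and of the signs, so that the derivation action matches the Pieri rule on the nose rather than merely up to scalars.
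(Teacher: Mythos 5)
Your proof is correct and takes essentially the same route as the paper: the paper likewise uses the explicit Dynkin triple \eqref{sl_2_in_sl_n} acting on the basis wedges $\bigwedge_{l}e_{\lambda_l+k-l}$ (recorded as Proposition \ref{EFH_schubert}), matches the $h$-eigenvalue $k(n-k)-2|\lambda|$ with the cohomological degree $2|\lambda|$ of $\sigma_{\lambda}$, and identifies $F$ with multiplication by $\sigma_{(1,0,\ldots,0)}=-p_1=-c_1(S)$ via the Pieri rule. The only difference is presentational: you re-derive inline (including the vanishing analysis for index collisions and the top index) what the paper quotes from its Section 3.
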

	In other words, $H=k(n-k)\cdot\id-2\cdot\mathrm{Deg}$, where $\mathrm{Deg}$ is the operator of multiplication by the cohomology degree and $F$ is the operator of multiplication by the negative of the first Chern class (i.e. by $-p_1$ in terms of algebra $\mathcal{A}$).

	\begin{Rem}\label{lefschetz}
		Note that $H^2(\Gr(k,n),\mathbb{C})$ is one-dimensional, so it is generated by the first Chern class $c_1(S)=-c_1(Q)$ or by $p_1=-q_1$ in terms of $\mathcal{A}$. Thus, it follows from the hard Lefschetz theorem for the complex Grassmannian that for each $j\le\frac{k(n-k)}{2}$ the operator $F^{k(n-k)-2j}$ produces an isomorphism between $H^j(\Gr(k,n),\mathbb{C})$ and $H^{k(n-k)-j}(\Gr(k,n),\mathbb{C})$. This exactly the same effect that we have seen on weight spaces of the representations of the Lie algebra $\mathfrak{s}$ (see Subsection 1.3).
	\end{Rem}
	
	One can ask a natural question: does the action of the operator $E$ have a geometrical meaning? Unfortunately, it seems that the answer for this question is unknown. However, it is possible to describe actions of $E$, $F$ and $H$ on $H^*(\Gr(k,n),\mathbb{C})$ completely in terms of the quotient algebra $\mathcal{A}=\mathbb{C}[p_1,\ldots,p_k,q_1,\ldots,q_{n-k}]/I$ and its generators $p_i,q_j$ (recall that $\mathcal{A}$ is isomorphic to the $H^*(\Gr(k,n),\mathbb{C})$). The following proposition is the main result of this note.
	
	\begin{Prop}\label{EFH_operators}
		The actions of the operators $E$, $F$ and $H$ in terms of algebra $\mathcal{A}$ can be described via the following differential operators:
		\begin{equation}\label{EFH_diff_formulas}
			\begin{aligned}
				H = & ~ -2\sum_{j=1}^{k}jp_j\frac{\partial}{\partial p_j}-2\sum_{j=1}^{n-k}jq_j\frac{\partial}{\partial q_j} + k(n-k),
				\\
				E = & -\sum_{j=1}^{k}(k-j+1)(n-k+j-1)p_{j-1}\frac{\partial}{\partial p_{j}}+
				\\
				&+\sum_{j=1}^{n-k}(k+j-1)(n-k-j+1)q_{j-1}\frac{\partial}{\partial q_{j}}+
				\\
				&+\sum_{j,l=1}^{k}\left(\sum_{d=0}^{j-1}(j+l-2d-1)p_{d}p_{j+l-d-1}\right)\frac{\partial^2}{\partial p_j \partial p_l}-
				\\
				&-\sum_{j,l=1}^{n-k}\left(\sum_{d=0}^{j-1}(j+l-2d-1)q_dq_{j+l-d-1}\right)\frac{\partial^2}{\partial q_j \partial q_l}+
				\\
				&+2\sum_{j=1}^{k}\sum_{l=1}^{n-k}(j+l-1)\left(\sum_{d=0}^{j-1}p_{d}q_{j+l-d-1}\right)\frac{\partial^2}{\partial p_j \partial q_l},
				\\
				F = & ~ -p_1.
			\end{aligned}
		\end{equation}
		Here the elements of the algebra $\mathcal{A}$ are considered as the corresponding scalar multiplication operators, i.e. $a\in\mathcal{A}$ correspond to the operator $L_a\colon\mathcal{A}\to\mathcal{A}$, where $L_a(b)=a\cdot b$.
	\end{Prop}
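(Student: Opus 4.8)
The plan is to reduce the statement to the single new assertion about $E$, since the first and last lines of \eqref{EFH_diff_formulas} merely restate the formulas for $H$ and $F$ already proved in Proposition~\ref{HF_action_cohomology} (note that $-2\sum_j jp_j\tfrac{\partial}{\partial p_j}-2\sum_j jq_j\tfrac{\partial}{\partial q_j}+k(n-k)$ is exactly $k(n-k)\cdot\id-2\,\mathrm{Deg}$ on $\mathcal A$), and then to compute $E=\rho(e)$ directly on the Schubert basis before recognizing the result as the displayed second-order operator. Concretely, I would fix the explicit principal nilpotent of Subsection~2.3, which in the basis $\{e_0,\dots,e_{n-1}\}$ of $\mathbb{C}^n$ takes the form $e=\sum_{i=0}^{n-2}(i+1)(n-1-i)E_{i,i+1}$, so that $e\cdot e_j=j(n-j)e_{j-1}$, together with $h=\operatorname{diag}(n-1,n-3,\dots,1-n)$ and $f=\sum_{i=0}^{n-2}E_{i+1,i}$. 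Under the identification of Subsection~3.2, $\sigma_\lambda$ corresponds to $e_{i_1}\wedge\cdots\wedge e_{i_k}$ with $i_m=\lambda_m+k-m$, and $\mathfrak{gl}_n$ acts on $\bigwedge^k\mathbb{C}^n$ as the derivation extension of its action on $\mathbb{C}^n$.

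Applying $\rho(e)$ to a basis wedge and using $e\cdot e_{i_m}=i_m(n-i_m)e_{i_m-1}$, I obtain the weighted box-removal formula
\begin{equation*}
E(\sigma_\lambda)=\sum_{m}i_m(n-i_m)\,\sigma_{\lambda-\square_m}=\sum_{m}(\lambda_m+k-m)(n-k-\lambda_m+m)\,\sigma_{\lambda-\square_m},
\end{equation*}
where the sum runs over those rows $m$ from which a corner box can be removed (equivalently $i_m-1>i_{m+1}$). This already recovers the first-order part of \eqref{EFH_diff_formulas}: taking $\lambda=(1^j)$ gives $E(p_j)=-(k-j+1)(n-k+j-1)p_{j-1}$ after the sign twist $p_j=(-1)^j\sigma_{1^j}$, while $\lambda=(l)$ gives $E(q_l)=(k+l-1)(n-k-l+1)q_{l-1}$ since $q_l=\sigma_l$.

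To identify $E$ with the full operator $D_E$ of \eqref{EFH_diff_formulas}, the efficient route is to first argue that $\rho(e)$ is represented by a differential operator of order at most two: in the fermionic realization of $\bigwedge^\bullet\mathbb{C}^n$ the element $e$ is quadratic in the creation/annihilation operators, so under the description of the ring structure of $\mathcal A$ it becomes a second-order operator in the Chern-class generators. Granting this, $E$ is determined by its values on $1$, on the generators, and on the pairwise products $p_jp_l$, $q_jq_l$, $p_jq_l$. The first two are handled above; the products are computed by the Pieri rule, and matching the resulting defect of the Leibniz rule against the symbols of the three families of quadratic terms fixes the coefficients $\sum_d(j+l-2d-1)p_dp_{j+l-d-1}$, its $q$-analogue, and the mixed $2(j+l-1)\sum_d p_dq_{j+l-d-1}$. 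As a sample check, $\sigma_1^2=\sigma_2+\sigma_{11}$ gives $E(q_1^2)=2(k(n-k)-1)q_1$, which matches $D_E(q_1^2)$ once the $qq$-term contributes the correction $-2q_1$ to the first-order value $2k(n-k)q_1$. Finally I would confirm that $D_E$ descends to $\mathcal A$ by checking $D_E(R_m)\in I$ for the ideal generators $R_m=\sum_{r+s=m}p_rq_s$. Alternatively, avoiding the order-two reduction entirely, one can substitute the Giambelli determinant for $\sigma_\lambda$ and verify that $D_E$ reproduces the box-removal formula for every $\lambda$.

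The main obstacle is this last matching step: translating the clean combinatorial box-removal operator into the explicit second-order operator, in particular pinning down the three families of quadratic coefficients together with their signs and the mixed $\tfrac{\partial^2}{\partial p_j\partial q_l}$ block, and verifying that the resulting operator preserves the ideal $I$. Packaging the convolution sums through the generating functions $P(x)=\sum_j p_jx^j$ and $Q(x)=\sum_l q_lx^l$, related by $P(x)Q(x)=1$ in $\mathcal A$, should streamline this bookkeeping and control the otherwise error-prone index manipulations.
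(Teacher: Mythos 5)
Your reduction of $H$ and $F$ to Proposition~\ref{HF_action_cohomology}, and your box-removal formula
$E(\sigma_\lambda)=\sum_m(\lambda_m+k-m)(n-k-\lambda_m+m)\,\sigma_{\lambda-\square_m}$ (which is exactly the paper's Proposition~\ref{EFH_schubert}), are correct, as is your sample Pieri check on $q_1^2$. But the load-bearing step of your plan --- that $E$ is a differential operator of order at most $2$ in the Chern-class generators --- is asserted, not proved, and the justification offered is a non sequitur: ``$e$ is quadratic in creation/annihilation operators, so it becomes a second-order operator'' fails as a general principle, since $f=\sum_i E_{i+1,i}$ and $h$ are equally quadratic in fermions yet act as operators of order $0$ (multiplication by $-p_1$) and order $1$ respectively, and a general fermion bilinear $\psi_i^*\psi_j$ does not act as a low-order differential operator on $\mathcal{A}$ at all; the quadratic coefficient $j(n-j)$ would, on general $W$-algebra grounds, suggest order up to $3$. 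The paper itself treats the order-$2$ property as an empirical discovery (verified by computer in Section~5) and deliberately structures its proof so as never to need it. Your fallback --- substituting Giambelli determinants and verifying the box-removal formula for every $\lambda$ --- is precisely the ``much more complicated computation involving Pieri, Giambelli and Jacobi--Trudi'' that the paper says it avoids, and you do not carry it out; naming it in one sentence does not close the gap.

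There is a second, smaller but genuine, error: checking $D_E(R_m)\in I$ for the generators $R_m$ does \emph{not} show that $D_E$ descends to $\mathcal{A}$, because $D_E$ is not a derivation. (On $\mathbb{C}[x]$ the operator $\partial_x^2$ kills the generator of $(x)$ but sends $x\cdot x$ to $2\notin(x)$.) One needs $D_E(fR_m)\in I$ for all $f$, i.e.\ that the first-order operator $[D_E,R_m]$ maps everything into $I$; concretely it suffices to check in addition that $D_E(R_m p_j),D_E(R_m q_l)\in I$, and this commutator computation $[D_e,R_m]\equiv 0\pmod I$ is exactly the paper's Step~1 and the bulk of its proof. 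Finally, note how the paper sidesteps both of your open steps: having verified descent (Step~1) and the $\mathfrak{sl}_2$ relations $[D_e,D_f]=D_h$, $[D_h,D_e]=2D_e$, $[D_h,D_f]=-2D_f$ (Step~2), it identifies $D_h=H$ and $D_f=F$ and then concludes $D_e=E$ from the uniqueness clause of the Jacobson--Morozov theorem: the element completing a given pair $(H,F)$ to an $\mathfrak{sl}_2$-triple, if it exists, is unique. Since you already have $\{E,F,H\}$ satisfying the $\mathfrak{sl}_2$ relations (images of Dynkin's triple) and the correct identifications of $H$ and $F$, adopting this soft uniqueness argument would replace both your unproved order-$2$ claim and the entire Pieri/Giambelli matching; without it, your proposal as written does not constitute a proof.
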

	We prove propositions \ref{HF_action_cohomology} and \ref{EFH_operators} in Section 4 of this note.
	
	\begin{Rem}
		In particular, differential operators given by these formulas are  well defined on the quotient $\mathbb{C}[p_1,\ldots,p_k,q_1,\ldots,q_{n-k}]/I$, i.e. as operators on the whole polynomial algebra $\mathbb{C}[p_1,\ldots,p_k,q_1,\ldots,q_{n-k}]$ they preserve the ideal $I$. Moreover, these operators form an $\mathfrak{sl}_2$-triple.
	\end{Rem}
	
	\subsection{Overview of the proofs of propositions \ref{HF_action_cohomology} and \ref{EFH_operators}.}
	On order to prove Proposition \ref{HF_action_cohomology} we will use the explicit formulas for $e$, $f$ and $h$ given in Subsection 2.3 and the isomorphism between $\bigwedge^k\mathbb{C}^n$ and $H^*(\Gr(k,n),\mathbb{C})$. After that, it is easy to check that actions of $H$ and $F$ on cohomology correspond to the differential operators in Proposition \ref{EFH_operators}. However, the similar check for $E$ requires much more complicated computations involving many tools from the theory of symmetric functions such as Pieri, Giambelli and Jacobi-Trudi identities. This is because we do not have an analogue of formulas \eqref{HF_cohomology} for the operator $E$. Initially, only the action of $E$ on Schubert classes is known (see Proposition \ref{EFH_schubert}).

	Instead of performing these computations, we first check that differential operators in Proposition \ref{EFH_operators} are well defined on $\mathcal{A}$ and satisfy the $\mathfrak{sl}_2$ relations. The proof is straight-forward and rather computational. Since we have explicit formulas it remains to check some algebraic identities. 
	
	Next, in order to prove that the action of the differential operator corresponding to $E$ coincides with the action of $e$ on $\mathcal{A}$, we use the Jacobson-Morozov theorem.
	The latter claims that any nonzero nilpotent endomorphism of the semisimple Lie algebra $\mathfrak{g}$ can be extended to an $\mathfrak{sl}_2$-triple. Moreover, such triple is unique up to transformation from the centralizer of that nilpotent element, see \cite[Chapter 6, Proposition 2.1]{Onishchik_Vinberg}. In particular, for given $f$ and $h$ the third element $e$ of $\mathfrak{sl}_2$-triple, if exists, is defined uniquely. Applying this fact to the Lie algebra of linear operators on $\mathcal{A}$, we obtain the desired. Indeed, since $H$ and $F$ are already the images of elements $h$ and $f$ of the Dynkin's principal $\mathfrak{sl}_2$-triple and $\{E,F,H\}$ satisfy $\mathfrak{sl}_2$ commutation relations, the operator $E$ must coincide with the image of the element $e$.
	
	One drawback of this proof is that the differential expression for $E$ is appearing from nowhere. It raises the following interesting question: given a linear operator on the quotient of the polynomial algebra, how can we get the differential expression for it? We discuss the general approach for solving such problems in Section 5 of this note. 

	\subsection{Contents.} Structure of the paper is as follows. In Section 2 we review the general construction of Dynkin's principal $\mathfrak{sl}_2$-triples.
	In Section 3 we review the explicit isomorphism between the cohomology ring and the $k$-th exterior power of $\mathbb{C}^n$ which is given by means of the Schubert classes. Besides that, we also give actions of $\{e,f,h\}$ on the Schubert classes. Section 4 is devoted to the proofs of propositions \ref{EFH_operators} and \ref{HF_action_cohomology}. In Section 5 we discuss general approach for finding differential expression for a given linear operator acting on polynomial algebra (or its quotient). Finally, in Section 6 we give concluding remarks to this paper.
	
	\textbf{Acknowledgements.} This work was done during the IST Austria Summer Program and with the support of OeAD (Austria's Agency for Education and Internationalisation). The author is grateful to his supervisor Tam\'as Hausel for the constant guidance and useful advice and also to Anton Mellit for fruitful discussions.

	\section{Construction of Dynkin' principal $\mathfrak{sl}_2$-triples}
	
	In this section we discuss the construction of Dynkin' principal $\mathfrak{sl}_2$-triples. It will be useful to consider more general context.
	
	\subsection{Notation and conventions.} Let $\mathfrak{g}$ be a semisimple Lie algebra. Denote by $\ad$ the adjoint map $\ad(\cdot)(\cdot)\colon\mathfrak{g}\times\mathfrak{g}\to\mathfrak{g}$ which is defined as $\ad(X)(Y)=[X,Y]$.
	
	Let $\mathfrak{h}$ be a Cartan subalgebra of the Lie algebra $\mathfrak{g}$.
	Consider the corresponding root system and the decomposition
	\begin{equation}
		\mathfrak{g}=\mathfrak{h}\oplus\bigoplus_{\alpha\in R}\mathfrak{g}_{\alpha},
	\end{equation}
	where $R$ is the set of roots of the Lie algebra $\mathfrak{g}$. Here $\mathfrak{g}_{\alpha}$ is a root space corresponding to the root $\alpha\in\mathfrak{h}^*$, i.e. for any $H\in\mathfrak{h}$ and $X\in\mathfrak{g}_{\alpha}$ we have
	\begin{equation}
		\ad(H)(X)=\alpha(H)\cdot X,
	\end{equation}
	so the $\ad(H)$ acts diagonally on $\mathfrak{g}_{\alpha}$. In fact, each subspace $\mathfrak{g}_{\alpha}$ is one-dimensional.
	
	It is known that the set $R$ of roots is symmetric with respect to the origin and one can choose an element $l\in(\mathfrak{h}^*)^*$ such that $0\notin l(R)$ and then decompose $R$ into two subsets: $R^+$ and $R^-$ depending on the sign of $l(\alpha)$. These roots are called positive and negative, respectively. We define then the set of \emph{simple roots} as the set of those $\alpha\in R^+$ which cannot be represented as sum of two elements in $R^+$. We will denote the set of simple roots as $S$.
	
	Let $B(\cdot,\cdot)\colon\mathfrak{g}\times\mathfrak{g}\to\mathbb{C}$ be the Killing form which is defined via the formula $B(X,Y)=\tr(\ad(X)\circ\ad(Y))$. It can be shown that $B(\cdot,\cdot)$ is a nondegenerate symmetric bilinear from on the semisimple Lie algebra $\mathfrak{g}$ (and on $\mathfrak{h}$ in particular) and, hence, the Killing form produces a natural isomorphism between $\mathfrak{h}^*$ and $\mathfrak{h}$ given by the correspondence
	\begin{equation*}
		\alpha\in\mathfrak{h}^*\leftrightarrow T_{\alpha}\in\mathfrak{h},
	\end{equation*}
	where $T_{\alpha}$ is defined by
	\begin{equation}
		B(T_{\alpha},H)=\alpha(H)~\text{for all}~H\in\mathfrak{h}.
	\end{equation}
	Using this isomorphism we can define the Killing form on $\mathfrak{h}^*$ via formula 
	\begin{equation}
		B(T_{\alpha},T_{\beta})=B(\alpha,\beta).
	\end{equation}
	
	\subsection{Dynkin's construction.} Now we can construct a principal $\mathfrak{sl}_2$-triple which satisfies conditions \eqref{dynkin_cond_sl2} and \eqref{dynkin_cond_proj}.
	For each $\alpha\in S$ choose an $X_{\alpha}\in \mathfrak{g}_{\alpha}$ and $Y_{\alpha}\in\mathfrak{g}_{-\alpha}$ such that 
	\begin{equation*}
		B(X_{\alpha},Y_{\alpha})=1.
	\end{equation*}
	Next, consider elements $e$, $f$, $h$ defined by the equalities
	\begin{equation}
		h=\sum_{\alpha\in S}p_{\alpha} T_{\alpha},~e=\sum_{\alpha\in S}u_{\alpha} X_{\alpha},~f=\sum_{\alpha\in S}v_{\alpha} Y_{\alpha}
	\end{equation}
	where complex numbers $u_{\alpha}$, $v_{\alpha}$ and $p_{\alpha}$ satisfy the following conditions
	\begin{align}
		\label{cond1} & B(h,T_{\alpha})=2~\text{for all}~\alpha\in S
		\\
		\label{cond2} & u_{\alpha} v_{\alpha}=p_{\alpha}~\text{for all}~\alpha\in S.
	\end{align}
	In other words, $u_{\alpha},v_{\alpha}$ and $p_{\alpha}$ satisfy the following system
	\begin{equation}\label{dynkin_sl_system}
		\begin{cases}
			\sum\limits_{\beta\in S}B(T_{\alpha},T_{\beta})\cdot p_{\alpha}=2
			\\
			u_{\alpha}v_{\alpha}=p_{\alpha}
		\end{cases}
	\end{equation}
	for all $\alpha\in S$.
	\begin{Rem}\label{dynkin_nonuniq}
		It is clear that this principal $\mathfrak{sl}_2$-triple is not unique since the only condition on $u_{\alpha}$ and $v_{\alpha}$ is that their product equals $p_{\alpha}$. However, the element $h$ is defined uniquely by the system \eqref{dynkin_sl_system} because the matrix $\{B(T_{\alpha},T_{\beta})\}_{\alpha,\beta\in S}$ is nondegenerate. 
	\end{Rem}
	Dynkin stated in his paper \cite{Dynkin} the following fact:
	\begin{Prop}
		Operators $E,F,H$ defined via \eqref{dynkin_sl_system} satisfy conditions \eqref{dynkin_cond_sl2} and \eqref{dynkin_cond_proj}. In particular, for any finite-dimensional representation $\rho\colon\mathfrak{g}\to \mathfrak{gl}(V)$ the triple $\{\rho(E),\rho(F),\rho(H)\}$ is an $\mathfrak{sl}_2$-triple acting on $V$.
	\end{Prop}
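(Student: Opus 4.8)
The plan is to verify the three commutation relations \eqref{dynkin_cond_sl2} and the projection condition \eqref{dynkin_cond_proj} directly from the definitions of $e$, $f$, $h$, and then deduce the representation-theoretic assertion formally. The projection condition is the quickest: by the defining property of $T_\alpha$ one has $\alpha(h)=B(T_\alpha,h)$ for every $\alpha\in S$, and this equals $2$ by \eqref{cond1}. The two relations involving $h$ follow from this. Since $h\in\mathfrak{h}$ and $X_\alpha\in\mathfrak{g}_\alpha$, we have $\ad(h)(X_\alpha)=\alpha(h)X_\alpha=2X_\alpha$, so summing against the coefficients $u_\alpha$ gives $[h,e]=2e$; the computation of $[h,f]=-2f$ is identical, using $Y_\alpha\in\mathfrak{g}_{-\alpha}$ and $(-\alpha)(h)=-2$.

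The heart of the proof is the relation $[e,f]=h$. I would expand
\[
[e,f]=\sum_{\alpha,\beta\in S}u_\alpha v_\beta [X_\alpha,Y_\beta]
\]
and analyze the brackets $[X_\alpha,Y_\beta]$ according to whether $\alpha=\beta$. For the diagonal terms, the invariance $B([X_\alpha,Y_\alpha],H)=B(X_\alpha,[Y_\alpha,H])=\alpha(H)B(X_\alpha,Y_\alpha)$ of the Killing form, combined with its nondegeneracy on $\mathfrak{h}$, yields the standard identity $[X_\alpha,Y_\alpha]=B(X_\alpha,Y_\alpha)T_\alpha=T_\alpha$. The off-diagonal terms are where the argument has real content: for distinct simple roots the bracket $[X_\alpha,Y_\beta]$ lies in $\mathfrak{g}_{\alpha-\beta}$, and $\alpha-\beta$ is never a root. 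Indeed, if $\alpha-\beta$ were a positive root then $\alpha=(\alpha-\beta)+\beta$ would contradict the simplicity of $\alpha$, and the case of a negative root is symmetric; hence $\mathfrak{g}_{\alpha-\beta}=0$ and the off-diagonal brackets vanish. Only the diagonal survives, so $[e,f]=\sum_{\alpha\in S}u_\alpha v_\alpha T_\alpha$, which equals $\sum_{\alpha\in S}p_\alpha T_\alpha=h$ by condition \eqref{cond2}.

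Finally, the ``in particular'' clause is immediate: a Lie algebra homomorphism $\rho$ preserves brackets, so $\rho(e),\rho(f),\rho(h)$ obey the same relations \eqref{dynkin_cond_sl2} and therefore constitute an $\mathfrak{sl}_2$-triple acting on $V$. I expect the only genuine obstacle to be the vanishing of the off-diagonal brackets, which rests entirely on the characterization of simple roots as those positive roots that are not sums of two positive roots; everything else reduces to the standard root-space identities and the bilinearity of the bracket.
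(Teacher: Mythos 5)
Your proof is correct; note, however, that the paper itself offers no proof of this proposition at all --- it is quoted as a fact from Dynkin's paper \cite{Dynkin} --- so there is no internal argument to compare against, and your verification supplies precisely the standard computation that the citation stands in for. All three ingredients are sound. The identity $\alpha(h)=B(T_\alpha,h)=2$ gives \eqref{dynkin_cond_proj} directly from \eqref{cond1}, and then $[h,e]=2e$, $[h,f]=-2f$ follow since $X_\alpha\in\mathfrak{g}_\alpha$ and $Y_\alpha\in\mathfrak{g}_{-\alpha}$ are root vectors. For the diagonal brackets, your invariance computation $B([X_\alpha,Y_\alpha],H)=\alpha(H)B(X_\alpha,Y_\alpha)=B(T_\alpha,H)$ is right, though you should state explicitly the fact that licenses the nondegeneracy argument, namely $[\mathfrak{g}_\alpha,\mathfrak{g}_{-\alpha}]\subseteq\mathfrak{g}_0=\mathfrak{h}$, so that both sides of the putative identity $[X_\alpha,Y_\alpha]=T_\alpha$ lie in $\mathfrak{h}$, where $B$ is nondegenerate. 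You correctly isolate the one substantive point --- that $\alpha-\beta$ is never a root for distinct simple $\alpha,\beta$ --- and your argument for it is exactly adapted to the paper's definition of simple roots as positive roots not expressible as a sum of two elements of $R^+$; together with $\alpha-\beta\neq 0$ this gives $[X_\alpha,Y_\beta]\in\mathfrak{g}_{\alpha-\beta}=0$, after which \eqref{cond2} yields $[e,f]=\sum_{\alpha\in S}u_\alpha v_\alpha T_\alpha=\sum_{\alpha\in S}p_\alpha T_\alpha=h$. One pedantic caveat on the final clause: if $\rho$ annihilates the principal subalgebra (e.g.\ a trivial representation), then $\rho(e),\rho(f),\rho(h)$ all vanish and the resulting ``triple'' is degenerate; but since the paper's definition of an $\mathfrak{sl}_2$-triple asks only for the commutation relations \eqref{sl2_rel}, your one-line observation that a Lie algebra homomorphism preserves brackets is exactly what the statement requires.
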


	\subsection{Explicit formulas for the case $\mathfrak{g}=\mathfrak{sl}_n(\mathbb{C})$.}
	We will use the standard matrix representation of $\mathfrak{sl}_n$. It would be convenient for us to enumerate rows and columns by numbers from $0$ to $n-1$. For any $i,j\in\{0,1,\ldots,n-1\}$ let $E_{ij}$ be the $(i,j)$-th matrix unit.
	It can be computed that elements of Dynkin's triple in the case $\mathfrak{g}=\mathfrak{sl}_n(\mathbb{C})$ are as follows
	\begin{equation}
		\begin{aligned}
			& h=\sum_{j=0}^{n-1}(n-1-2j)E_{jj},
			\\
			& e=\sum_{j=1}^{n-1} u_j\cdot E_{j-1j},
			\\
			& f=\sum_{j=1}^{n-1}u_{j}^{-1}\cdot j(n-j)E_{jj-1},
		\end{aligned}
	\end{equation}
	where $\{u_{j}\}_{j=1}^{n-1}$ are arbitrary nonzero scalars. From now on we set $u_{j}=j(n-j)$ for all $j=\overline{1,n-1}$ in further results. In this case we have
	\begin{equation}\label{sl_2_in_sl_n}
		\begin{aligned}
			& h=\sum_{j=0}^{n-1}(n-1-2j)E_{jj},
			\\
			& e=\sum_{j=1}^{n-1} j(n-j)\cdot E_{j-1j},
			\\
			& f=\sum_{j=1}^{n-1}E_{jj-1}.
		\end{aligned}
	\end{equation}
	We will often refer to this particular triple as to the Dynkin's principal $\mathfrak{sl}_2$-triple of $\mathfrak{sl}_n(\mathbb{C})$.
	\begin{Rem}
		One can also choose $u_j=1$ instead of $u_j=j(n-j)$. This option is used for example, in \cite[Chapter 3, 3.7]{Chriss-Ginzburg}.
	\end{Rem}
	
	Now, using this formulas one can find the action of $\{e,f,h\}$ on the basis of the $k$-th exterior power $\bigwedge^k\mathbb{C}^n$. Let $V=\mathbb{C}^n$ be the standard $n$-dimensional representation of $\mathfrak{sl}_n(\mathbb{C})$ with the basis $\{e_0,\ldots,e_{n-1}\}$ and $\rho\colon\mathfrak{g}\to\mathfrak{gl}(\bigwedge^k V)$ be the corresponding representation of $\mathfrak{g}$. Then, it easy to check using \eqref{sl_2_in_sl_n} that for any basis vector $e_{j_1}\wedge\ldots\wedge e_{j_k}$ in $\bigwedge^k V$ we have
	\begin{equation}
		\begin{split}
			& \rho(h)(e_{j_1}\wedge\ldots\wedge e_{j_k})=\left(k(n-1)-2\sum_{l=1}^{k}j_l\right)\cdot e_{j_1}\wedge\ldots\wedge e_{j_k},
			\\
			& \rho(e)(e_{j_1}\wedge\ldots\wedge e_{j_k})=\sum_{l=1}^{k}j_l(n-j_l)\cdot e_{j_1}\wedge\ldots\wedge e_{j_{l-1}}\wedge e_{j_{l}+1}\wedge e_{j_{l+1}}\wedge\ldots\wedge e_{j_k},
			\\
			& \rho(f)(e_{j_1}\wedge\ldots\wedge e_{j_k})=\sum_{l=1}^{k} e_{j_1}\wedge\ldots\wedge e_{j_{l-1}}\wedge e_{j_{l}+1}\wedge e_{j_{l+1}}\wedge\ldots\wedge e_{j_k}.
		\end{split}
	\end{equation}

	\section{$\mathrm{GL}_n$-structure on the cohomology ring}
	In this section we discuss the $\mathrm{GL}_n$-structure on the cohomology ring of the complex Grassmannian. 
	\subsection{Schubert classes.}
	The cohomology ring of the Grassmannian can be also described via the Schubert calculus. We refer the reader to \cite[Chapter 14, Sections 14.6 and 14.7]{Fulton1}, \cite[Chapter 9, Section 9.4]{Fulton2} and \cite[Sections 3 and 4]{Gillespie} for more details.
	
	Let us first recall some notions from the theory of symmetric functions. A \textbf{partition} is a nonincreasing sequence of nonnegative integers $\lambda=(\lambda_1,\ldots,\lambda_m)$. 
	The \textbf{size} of the partition $\lambda$, denoted $|\lambda|$, is $\sum_{j}\lambda_j$, the entries of $\lambda$ are called its \textbf{parts} and the number of nonzero parts, denoted $\ell(\lambda)$, is called the \textbf{length} of $\lambda$. The \textbf{Young diagram} of a partition $\lambda$ is the left-aligned partial grid of boxes in which the $i$-th row from the top has $\lambda_i$ boxes.
	Let $\mathcal{P}_{k,n}$ be the set of partitions $\lambda$ of the length at most $k$ and whose parts are at most $n-k$.
	
	\begin{Rem}
		In terms of the Young diagrams the set $\mathcal{P}_{k,n}$ consists of all partitions $\lambda$, whose Young diagram can be embedded into the rectangle $k\times(n-k)$.
	\end{Rem}

	Fix an arbitrary complete flag $\mathcal{F}=(F_{j})_{j=1}^{n}$ in the initial $n$-dimensional vector space $W$:
	\begin{equation}
		0\subset F_1\subset\ldots\subset F_{n-1}\subset F_{n}=W.
	\end{equation}
	For each partition $\lambda\in\mathcal{P}_{k,n}$ the \textbf{Schubert subvariety} $\Omega_{\lambda}$ associated to $\mathcal{F}$
	is defined as
	\begin{equation}
		\Omega_{\lambda}=\{U\in\Gr(k,n)\colon\dim (U\cap F_r)=j~\text{for}~n-k+j-\lambda_j\le n-k+j-\lambda_{j+1}\}.
	\end{equation}
	The corresponding \textbf{Schubert class} $\sigma_{\lambda}$ is defined as the cohomology class dual to $\Omega_{\lambda}$.
	
	The following proposition describes multiplication in the cohomology ring in terms of Schubert classes.
	\begin{Prop}\label{symm_func_isom}
		Schubert classes $\{\sigma_{\lambda}\}_{\lambda\in\mathcal{P}_{k,n}}$ form $\mathbb{C}$-basis in the cohomology ring $H^*(\Gr(k,n),\mathbb{C})$. The ring structure is given by the Littlewood-Richardson coefficients $c_{\lambda\mu}^{\nu}$:
		\begin{equation}
			\sigma_{\lambda}\cdot\sigma_{\mu}=\sum_{\nu}c_{\lambda\mu}^{\nu}\cdot\sigma_{\nu}.
		\end{equation}
		Moreover, there is an isomorphism 
		\begin{equation}
			H^*(\Gr(k,n),\mathbb{C})\simeq\Lambda[x_1,x_2,\ldots]/(s_{\lambda}\mid\lambda\notin\mathcal{P}_{k,n}),
		\end{equation}
		where $\Lambda[x_1,x_2,\ldots]$ is the ring of the symmetric functions in infinitely many variables; under this isomorphism the Schubert class $\sigma_{\lambda}$ maps to the Schur function $s_{\lambda}$ which corresponds to the partition $\lambda$.
	\end{Prop}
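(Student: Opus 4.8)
The plan is to prove the three assertions in turn: the basis property, the identification of the ring structure with symmetric functions (whence the Littlewood--Richardson product formula follows), and finally the explicit quotient presentation. For the basis statement I would invoke the Schubert cell decomposition. Fixing the flag $\mathcal{F}$, the open Schubert cells $\Omega_\lambda^\circ$, indexed by $\lambda\in\mathcal{P}_{k,n}$, are affine spaces of complex dimension $k(n-k)-|\lambda|$ and give $\Gr(k,n)$ a CW structure whose cells all sit in even real dimension. With no odd-dimensional cells the cellular coboundary maps vanish, so the cohomology is free with exactly one generator in degree $2|\lambda|$ for each $\lambda\in\mathcal{P}_{k,n}$; these generators are precisely the classes $\sigma_\lambda$ dual to the closures $\Omega_\lambda=\overline{\Omega_\lambda^\circ}$. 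This shows $\{\sigma_\lambda\}_{\lambda\in\mathcal{P}_{k,n}}$ is a $\mathbb{C}$-basis and recovers $\dim_{\mathbb{C}}H^*(\Gr(k,n),\mathbb{C})=|\mathcal{P}_{k,n}|=\binom{n}{k}$.

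Next I would identify the special Schubert classes $\sigma_{(r)}$ (single-row partitions) with the Chern classes of the universal quotient bundle, $\sigma_{(r)}=c_r(Q)=q_r$. The essential geometric input is Pieri's rule, computing $\sigma_\lambda\cdot\sigma_{(r)}$ as the sum of $\sigma_\mu$ over all $\mu\in\mathcal{P}_{k,n}$ with $\mu\supseteq\lambda$ and $\mu/\lambda$ a horizontal strip of size $r$; it is proved by intersecting $\Omega_\lambda$ transversally with a special Schubert variety relative to a generic flag and reading off the resulting incidences, or taken from the cited references. The bridge to symmetric functions is the Giambelli (Jacobi--Trudi) determinantal identity $\sigma_\lambda=\det(\sigma_{(\lambda_i-i+j)})_{1\le i,j\le \ell(\lambda)}$. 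Since the Schur functions obey the same identity $s_\lambda=\det(h_{\lambda_i-i+j})$ in the complete homogeneous symmetric functions, and since $\Lambda=\mathbb{C}[h_1,h_2,\ldots]$ is a free polynomial ring, the assignment $h_r\mapsto\sigma_{(r)}$ extends to a ring homomorphism $\varphi\colon\Lambda\to H^*(\Gr(k,n),\mathbb{C})$ with $\varphi(s_\lambda)=\sigma_\lambda$ for all $\lambda\in\mathcal{P}_{k,n}$; comparing Pieri in cohomology with Pieri in $\Lambda$ shows $\varphi(s_\lambda)=0$ for $\lambda\notin\mathcal{P}_{k,n}$. The Littlewood--Richardson description of products is then immediate: applying the ring map $\varphi$ to $s_\lambda s_\mu=\sum_\nu c^\nu_{\lambda\mu}s_\nu$ gives $\sigma_\lambda\sigma_\mu=\sum_{\nu\in\mathcal{P}_{k,n}}c^\nu_{\lambda\mu}\sigma_\nu$.

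For the explicit presentation I would show $\ker\varphi$ equals the ideal $J=(s_\lambda\mid\lambda\notin\mathcal{P}_{k,n})$. That the span $\langle s_\lambda\mid\lambda\notin\mathcal{P}_{k,n}\rangle$ is already an ideal follows from Pieri (hence Littlewood--Richardson) inside $\Lambda$: multiplying any $s_\mu$ with $\mu\notin\mathcal{P}_{k,n}$ by a special class yields only Schur functions indexed by partitions that again fail to fit in the $k\times(n-k)$ box. Consequently $\Lambda/J$ has $\mathbb{C}$-basis $\{s_\lambda\mid\lambda\in\mathcal{P}_{k,n}\}$. Since $\varphi$ annihilates $J$ and carries the residues of these basis elements onto the basis $\{\sigma_\lambda\}$ of $H^*(\Gr(k,n),\mathbb{C})$, the induced map $\Lambda/J\to H^*(\Gr(k,n),\mathbb{C})$ is an isomorphism of graded rings sending $s_\lambda\mapsto\sigma_\lambda$, as claimed.

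I expect the main obstacle to be Pieri's rule, which carries all the genuinely geometric content: it is the step where transversality of Schubert varieties against a generic flag must be matched with the combinatorics of horizontal strips, while everything downstream (Giambelli, the homomorphism $\varphi$, the product formula, and the quotient description) is formal algebra over the theory of symmetric functions. A secondary subtlety is keeping the conventions relating $\sigma_{(r)}$, $c_r(Q)$, and $h_r$ consistent with the dualities built into the definition of $\Omega_\lambda$, so that $\varphi(s_\lambda)=\sigma_\lambda$ holds exactly rather than up to transposition of partitions.
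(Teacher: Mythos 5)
Your proposal is correct, but there is nothing in the paper to compare it against: the paper does not prove Proposition \ref{symm_func_isom} at all, quoting it as standard background and deferring to the cited references (Fulton's \emph{Intersection theory}, \emph{Young Tableaux} \S 9.4, and Gillespie's survey). What you wrote is exactly the standard argument from those sources: the even-dimensional cell decomposition for the basis claim, geometric Pieri together with Giambelli/Jacobi--Trudi to produce the ring homomorphism $\varphi\colon\Lambda\to H^*(\Gr(k,n),\mathbb{C})$ with $h_r\mapsto\sigma_{(r)}=c_r(Q)$, and the observation that the linear span of $\{s_\lambda\mid\lambda\notin\mathcal{P}_{k,n}\}$ is already an ideal (containment in the $k\times(n-k)$ box only gets worse under Pieri/Littlewood--Richardson growth), which identifies the kernel by a dimension count. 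The one step you state telegraphically is the claim that $\varphi(s_\lambda)=0$ for $\lambda\notin\mathcal{P}_{k,n}$: Giambelli makes this immediate when $\lambda_1>n-k$ (the first row of the determinant vanishes entry-wise), but when $\ell(\lambda)>k$ the entries $\sigma_{(\lambda_i-i+j)}$ need not vanish, so you should either pass to the dual Jacobi--Trudi identity in the $e_r$'s and note $\varphi(e_r)=(-1)^r c_r(S)=0$ for $r>k$ (using $c(S)c(Q)=1$), or run your Pieri comparison as a bona fide induction, i.e.\ check that the linear map sending $s_\lambda\mapsto\sigma_\lambda$ for $\lambda\in\mathcal{P}_{k,n}$ and $s_\lambda\mapsto 0$ otherwise intertwines multiplication by $h_r$ on both sides and hence coincides with $\varphi$. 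With that step spelled out the argument is complete.
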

	
	In this description of the cohomology ring of the Grassmannian the Chern classes correspond to the elementary and complete symmetric functions. Namely, under the isomorphism mentioned in Proposition \ref{symm_func_isom} we have the following correspondence:
	\begin{equation}\label{chern_schubert}
		\begin{split}
			& p_{j}\longleftrightarrow(-1)^je_{j}=(-1)^js_{(1,\ldots,1)},
			\\
			& q_{j}\longleftrightarrow h_j=s_{(j)},
		\end{split}
	\end{equation}
	where $e_j$ is the $j$-th elementary symmetric polynomial and $h_j$ is the $j$-th complete symmetric polynomial (see \cite{Macdonald} for more details).
	
	\subsection{Isomorphism with the $k$-th exterior power of $\mathbb{C}^n$.}
	Let $V=\mathbb{C}^n$ be an $n$-dimensional vector space with basis  $\{e_0,\ldots,e_{n-1}\}$. Now, following \cite[Section 1.3]{Gatto}, let us assign to each partition $\lambda=(\lambda_1,\ldots,\lambda_k)$, where $\lambda\in\mathcal{P}_{k,n}$, the basis element $\bigwedge\limits_{j=1}^{k}e_{\lambda_j+k-j}$ of the $k$-th exterior power $\bigwedge^k V$. Note that while $\lambda$ runs over the set $\mathcal{P}_{k,n}$, the sequence $(\lambda_j+k-j)_{j=1}^{k}$ runs over all strictly decreasing sequences with elements from the set $\{0,1,\ldots,n-1\}$. 
	
	Thus, we defined a linear isomorphism between $H^*(\Gr(k,n),\mathbb{C})$ and $\bigwedge^k\mathbb{C}^n$, so now we can endow the cohomology ring of $\Gr(k,n)$ with a structure of a $\mathrm{GL}_n$-representation via this correspondence. 
	
	As was mentioned in Subsection 2.3, the elements of the Dynkin's $\mathfrak{sl}_2$-triple of $\mathfrak{sl}_n(\mathbb{C})$ act on basis vectors of $\bigwedge^k V$ in the following way:
	\begin{equation}\label{sl_2_exterior}
		\begin{split}
			& \rho(h)(e_{j_1}\wedge\ldots\wedge e_{j_k})=(k(n-1)-2(j_1+\ldots+j_k))\cdot e_{j_1}\wedge\ldots\wedge e_{j_k},
			\\
			& \rho(e)(e_{j_1}\wedge\ldots\wedge e_{j_k})=\sum_{l=1}^{k}j_l(n-j_l)\cdot e_{j_1}\wedge\ldots\wedge e_{j_{l-1}}\wedge e_{j_l-1}\wedge e_{j_{l+1}}\wedge\ldots\wedge e_{j_k},
			\\
			& \rho(f)(e_{j_1}\wedge\ldots\wedge e_{j_k})=\sum_{l=1}^{k} e_{j_1}\wedge\ldots\wedge e_{j_{l-1}}\wedge e_{j_l+1}\wedge e_{j_{l+1}}\wedge\ldots\wedge e_{j_k}.
		\end{split}
	\end{equation}
	Here we define $e_{-1}=e_n=0$.
	
	Now using the isomorphism between $\bigwedge^k V$ and $H^*(\Gr(k,n),\mathbb{C})$ we can describe the action of the triple $\{E,F,H\}$ on Schubert classes. The following proposition is an immediate consequence of formulas \eqref{sl_2_exterior} and the aforementioned isomorphism.
	\begin{Prop}\label{EFH_schubert}
		Operators $E$, $F$ and $H$ act on Schubert classes $\sigma_{\lambda}$ in the following way: for every $\lambda\in\mathcal{P}_{k,n}$
		\begin{equation}\label{sl_2_schubert_classes}
			\begin{split}
				& H(\sigma_{\lambda})=(-2|\lambda|+k(n-k))\cdot\sigma_{\lambda},
				\\
				& E(\sigma_{\lambda})=\sum_{l}^{}(k+\lambda_l-l)(n-k-\lambda_l+l)\cdot\sigma_{\lambda^{(l)}},
				\\
				& F(\sigma_{\lambda})=\sum_{\mu}\sigma_{\mu}.
			\end{split}
		\end{equation}
		Here $\mu$ runs over all partitions from $\mathcal{P}_{k,n}$ which can be obtained from $\lambda$ by adding one box and $l$ runs over all indices from $\{1,2,\ldots,k\}$ for which the sequence
		\begin{equation*}
			\lambda^{(l)}=(\lambda_1,\ldots,\ldots,\lambda_{l-1},\lambda_{l}-1,\lambda_{l+1},\ldots,\lambda_k)
		\end{equation*}
		is a partition from $\mathcal{P}_{k,n}$.
	\end{Prop}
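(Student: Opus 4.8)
The plan is to obtain \eqref{sl_2_schubert_classes} directly from the formulas \eqref{sl_2_exterior} by transporting them through the isomorphism $\sigma_\lambda\mapsto\bigwedge_{j=1}^k e_{\lambda_j+k-j}$. Concretely, I would set $j_l=\lambda_l+k-l$ for $1\le l\le k$, substitute these indices into \eqref{sl_2_exterior}, and rewrite each resulting wedge product as the Schubert class attached to the corresponding strictly decreasing sequence. Since $\lambda\mapsto(\lambda_l+k-l)_{l=1}^k$ is a bijection from $\mathcal{P}_{k,n}$ onto the strictly decreasing sequences in $\{0,\ldots,n-1\}$, every nonzero wedge that appears is again a basis vector of $\bigwedge^k V$, hence a Schubert class.

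The resulting computations are routine. For $H$ one finds $\sum_{l=1}^k j_l=|\lambda|+\tfrac{k(k-1)}{2}$, so the eigenvalue $k(n-1)-2\sum_l j_l$ of \eqref{sl_2_exterior} simplifies to $k(n-k)-2|\lambda|$. For $E$ the coefficient $j_l(n-j_l)$ becomes $(k+\lambda_l-l)(n-k-\lambda_l+l)$, matching \eqref{sl_2_schubert_classes} on the nose, while lowering the $l$-th index to $j_l-1=(\lambda_l-1)+k-l$ produces the basis vector of $\lambda^{(l)}$. Symmetrically, for $F$ the coefficient is $1$ and raising the $l$-th index to $j_l+1=(\lambda_l+1)+k-l$ produces the basis vector of the partition obtained from $\lambda$ by adding a box in row $l$.

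The only step requiring genuine care is matching the vanishing of the wedge products with the admissibility of the output partitions; this is where all the combinatorial content lies. For $E$, the $l$-th term of \eqref{sl_2_exterior} vanishes exactly when $j_l-1$ collides with $j_{l+1}$ or becomes negative, and I would check that these are equivalent, respectively, to $\lambda_l=\lambda_{l+1}$ and to $\lambda_k=0$ with $l=k$---precisely the cases in which $\lambda^{(l)}$ fails to lie in $\mathcal{P}_{k,n}$. For $F$, the $l$-th term vanishes exactly when $j_l+1$ collides with $j_{l-1}$ or equals $n$, equivalently when $\lambda_{l-1}=\lambda_l$ or $\lambda_l=n-k$---precisely the obstructions to adding a box in row $l$ while remaining in $\mathcal{P}_{k,n}$. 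Granting these equivalences, the surviving terms range exactly over the indices $l$ and partitions $\mu$ described in the statement, and the proposition follows.
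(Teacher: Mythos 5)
Your proposal is correct and takes essentially the same route as the paper: the paper states Proposition \ref{EFH_schubert} as an immediate consequence of the formulas \eqref{sl_2_exterior} and the isomorphism $\sigma_{\lambda}\mapsto\bigwedge_{j=1}^{k}e_{\lambda_j+k-j}$, and your substitution $j_l=\lambda_l+k-l$ together with the eigenvalue and coefficient computations simply writes out the details left implicit there. Your careful matching of the vanishing wedge terms (collisions $j_l-1=j_{l+1}$, $j_l+1=j_{l-1}$, and the boundary cases $j_l=0$, $j_l+1=n$) with the admissibility of $\lambda^{(l)}$ and $\mu$ in $\mathcal{P}_{k,n}$ is exactly the check the paper's ``immediate consequence'' elides, and it is carried out correctly.
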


	\begin{Rem}
		We can use this isomorphism in different way. Observe that we can endow the $k$-th exterior power of the standard representation $V$ of $\mathrm{GL}_n$ (or $\mathrm{SL}_n$) with a certain ring structure. 
		It turns out that there is a class of irreducible representations of semisimple algebraic groups called \textbf{weight multiplicity free} representations that also possess a similar ring structure. Moreover, weight multiplicity free representations often arise as cohomology rings of certain projective varieties. For instance, for a minuscule representation the corresponding projective variety is a generalised flag variety. We refer the reader to the paper \cite{Panyushev} for more details. It can be verified that our ring strucure on $\bigwedge^k V$ coincides with the one defined in \cite[Section 5 and Theorem 5.5]{Panyushev}.
	\end{Rem}

	\section{Proofs of Propositions \ref{HF_action_cohomology} and \ref{EFH_operators}}

	In this section we prove the propositions \ref{HF_action_cohomology} and \ref{EFH_operators}. We will need the following simple computational lemma for both propositions (here $[\cdot,\cdot]$ denotes the commutator and $\delta_{km}$ is the Kronecker delta):
	
	\begin{Lm}\label{commut_comp}
		a) For all possible indices $j$, $l$, $r$ and $s$ the following commutation relations hold
		\begin{equation*}
			\begin{aligned}
				&\left[\frac{\partial}{\partial p_j},  p_rq_s\right]=\delta_{jr}q_s,~
				\left[\frac{\partial^2}{\partial p_j\partial p_l}, p_rq_s\right]=\delta_{lr}q_s\frac{\partial}{\partial p_j}+\delta_{jr}q_s\frac{\partial}{\partial p_l},
				\\
				&\left[\frac{\partial}{\partial q_j},  p_rq_s\right]=\delta_{js}p_r,~
				\left[\frac{\partial^2}{\partial q_j\partial q_l}, p_rq_s\right]=\delta_{ls}p_r\frac{\partial}{\partial q_j}+\delta_{js}p_r\frac{\partial}{\partial q_l},
				\\
				& \left[\frac{\partial^2}{\partial p_j\partial q_l}, p_rq_s\right]=\delta_{ls}p_r\frac{\partial}{\partial p_j}+\delta_{jr}q_s\frac{\partial}{\partial q_l}+\delta_{jr}\delta_{ls}.
			\end{aligned}
		\end{equation*}
		b) For any nonnegative integers $\alpha$ and $\beta$
		\begin{equation*}
			\left[x\frac{\partial}{\partial x},x^{\alpha}\frac{\partial^{\beta}}{\partial x^{\beta}}\right]=(\alpha-\beta)\cdot x^{\alpha}\frac{\partial^{\beta}}{\partial x^{\beta}}~\text{and}~\left[\frac{\partial^{\alpha}}{\partial x^{\alpha}},x\right]=\alpha\cdot\frac{\partial^{\alpha-1}}{\partial x^{\alpha-1}}.
		\end{equation*}
	\end{Lm}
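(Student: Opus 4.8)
The plan is to derive the entire lemma from a single structural fact: the operators appearing here live in the Weyl algebra generated by the variables $p_i,q_j$ and the partial derivatives $\partial/\partial p_i,\partial/\partial q_j$, governed by the canonical relations $[\partial/\partial p_i,p_r]=\delta_{ir}$, $[\partial/\partial q_j,q_s]=\delta_{js}$, together with the fact that a $p$-derivative commutes with every $q_s$ and a $q$-derivative commutes with every $p_r$. The only Lie-algebraic input I will use is that $\ad(D)=[D,\cdot]$ is a derivation of the associative algebra, i.e.\ satisfies the Leibniz rule $[AB,C]=A[B,C]+[A,C]B$.

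First I would isolate the degree-one case: if $D$ is one of the partial derivatives and $L_a$ denotes multiplication by a polynomial $a$, then $[D,L_a]=L_{D(a)}$, which one checks by evaluating both sides on an arbitrary element and using the product rule. Applying this with $a=p_rq_s$ settles the two first-order relations of part (a) at once, since $\partial(p_rq_s)/\partial p_j=\delta_{jr}q_s$ and $\partial(p_rq_s)/\partial q_j=\delta_{js}p_r$.

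For the three second-order relations I would write each operator as a product $D_1D_2$ of partials and expand, using Leibniz and the degree-one case,
\begin{equation*}
	[D_1D_2,L_a]=D_1[D_2,L_a]+[D_1,L_a]D_2=D_1L_{D_2a}+L_{D_1a}D_2.
\end{equation*}
The one remaining step is to normal-order the first summand by pushing the multiplication operator to the left: $D_1L_{D_2a}=L_{D_1D_2a}+L_{D_2a}D_1$. This produces the uniform identity $[D_1D_2,L_a]=L_{D_1D_2a}+L_{D_2a}D_1+L_{D_1a}D_2$, and the three stated cases follow by substituting $a=p_rq_s$ and reading off the relevant partial derivatives. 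The scalar term $\delta_{jr}\delta_{ls}$ in the mixed $p,q$-relation is exactly $L_{D_1D_2a}$, which vanishes for the two pure second-order operators because $p_rq_s$ is linear in each family of variables separately, and survives only in the mixed case where $\partial^2(p_rq_s)/\partial p_j\partial q_l=\delta_{jr}\delta_{ls}$.

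Part (b) I would handle by the same calculus in one variable. For the first identity, note $[x\partial_x,x]=x$ and $[x\partial_x,\partial_x]=-\partial_x$, so $\ad(x\partial_x)$ acts with weight $+1$ on each factor $x$ and $-1$ on each factor $\partial_x$; since it is a derivation, it scales the monomial $x^{\alpha}\partial_x^{\beta}$ by $\alpha-\beta$. For the second identity I would iterate Leibniz to get $[\partial^{\alpha},x]=\sum_{i=0}^{\alpha-1}\partial^{i}[\partial,x]\partial^{\alpha-1-i}$, and since $[\partial,x]=1$ is central, each of the $\alpha$ summands equals $\partial^{\alpha-1}$. I expect no genuine obstacle here: the whole statement is Weyl-algebra bookkeeping, and the only point demanding care is the normal-ordering in the second-order case, where one must retain the $L_{D_1D_2a}$ term that yields the Kronecker-delta scalar in the mixed relation.
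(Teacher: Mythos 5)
Your proposal is correct, and in fact the paper offers no proof of this lemma at all --- it is stated as a ``simple computational lemma'' and its verification is left implicit, so your argument supplies exactly the routine check the paper presumes. Your uniform normal-ordering identity $[D_1D_2,L_a]=L_{D_1D_2a}+L_{D_2a}D_1+L_{D_1a}D_2$ cleanly packages all five relations of part (a), correctly explaining why the scalar term $\delta_{jr}\delta_{ls}$ survives only in the mixed case (since $p_rq_s$ is linear in each family of variables), and your weight argument via the derivation $\ad(x\partial_x)$ together with the iterated Leibniz expansion of $[\partial^{\alpha},x]$ settles part (b); I find no gaps.
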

	
	\begin{proof}[Proof of Proposition \ref{HF_action_cohomology}.] Proposition \ref{EFH_schubert} gives the explicit formulas for the actions of the elements $h$ and $f$ on the Schubert classes. Now we will simply check that the action of operators from Proposition \ref{HF_action_cohomology} on Schubert classes gives the same result as in Proposition \ref{EFH_schubert}. Denote temporarily the operators from Proposition \ref{HF_action_cohomology} as $H'$ and $F'$. We need to prove that $H=H'$ and $F=F'$. Since $\{\sigma_{\lambda}\}_{\lambda\in\mathcal{P}_{k,n}}$ form a basis of $H^*(\Gr(k,n),\mathbb{C})$ (see Proposition \ref{symm_func_isom}) it is enough to check that the actions of $H, H'$ and $F,F'$ on $\sigma_{\lambda}$ are the same.
		
		For $H$ it is sufficient to notice that according to Proposition \ref{EFH_schubert} we have
		\begin{equation}
			H(\sigma_{\lambda})=(-2|\lambda|+k(n-k))\cdot\sigma_{\lambda}.
		\end{equation}
		Note that the Schubert class $\sigma_{\lambda}$ has cohomological degree $2|\lambda|$ and hence we have $H'(\sigma_{\lambda})=H(\sigma_{\lambda})$ (see \ref{HF_action_cohomology}). 
		
		Similarly, the action of $F'$ is the multiplication by $-p_1$ which is the Schubert class of the partition $(1,0,\ldots,0)$. Hence,
		\begin{equation}
			F'(\sigma_{\lambda})=\sigma_{(1,0,\ldots,0)}\cdot\sigma_{\lambda}.
		\end{equation}
		Now note that for the corresponding Schur functions we have the following identity which is a particular case of the Pieri's rule  (see \cite[Chapter I, 5.16]{Macdonald}):
		\begin{equation}
			s_{(1,0,\ldots,0)}\cdot s_{\lambda}=\sum_{\mu}s_{\mu},
		\end{equation}
		where $\mu$ runs over all partitions which can be obtained by adding one box to $\lambda$. In view of Proposition \ref{symm_func_isom} it means that
		\begin{equation}
			\sigma_{(1,0,\ldots,0)}\cdot\sigma_{\lambda}=\sum_{\mu}\sigma_{\mu},
		\end{equation}
		where $\mu$ runs over all partitions from $\mathcal{P}_{k,n}$ which can be obtained by adding one box to $\lambda$. Thus, 
		\begin{equation}
			F'(\sigma_{\lambda})=\sum_{\mu}\sigma_{\mu}=F(\sigma_{\lambda})
		\end{equation}
		for all $\lambda\in\mathcal{P}_{k,n}$ and consequently $F=F'$.
		
		Finally, to check that Dynkin's grading on $H^*(\Gr(k,n),\mathbb{C})$ corresponds to the cohomological degree it suffices to observe that $H^{2j}(\Gr(k,n),\mathbb{C})$ is the $(k(n-k)-2j)$-eigenspace for the operator $H$. Therefore, the decomposition $H^{*}(\Gr(k,n),\mathbb{C})=\bigoplus_{j=0}^{k(n-k)}H^{2j}(\Gr(k,n),\mathbb{C})$ is the weight decomposition of $H^*(\Gr(k,n),\mathbb{C})$ as an $\mathfrak{s}$-representation.
	\end{proof}
	
	\begin{Rem}\label{jl_symmetry}
		At first glance, it might seem that coefficients of $\frac{\partial^2}{\partial p_j\partial p_l}$ and $\frac{\partial^2}{\partial q_j\partial q_l}$ in the expression of $E$ (see \eqref{EFH_diff_formulas}) are not symmetric with respect to $j$ and $l$. However, these coefficients $\sum\limits_{d=0}^{j-1}(j+l-2d-1)p_{d}p_{j+l-d-1}$ and $\sum\limits_{d=0}^{j-1}(j+l-2d-1)q_{d}q_{j+l-d-1}$ are indeed symmetric with respect to $j$ and $l$ since
		\begin{equation}
			\begin{split}
				\sum_{d=0}^{j-1}(j+l-2d-1)p_{d}p_{j+l-d-1}
				&=\sum_{d=0}^{j-1}(j+l-d-1)p_{d}p_{j+l-d-1}
				-\sum_{d=0}^{j-1}dp_{d}p_{j+l-d-1}=
				\\
				&=\sum_{d=0}^{l}dp_{d}p_{j+l-d-1}-\sum_{d=0}^{j-1}dp_{d}p_{j+l-d-1}=
				\\
				&=\sum_{d=0}^{j}dp_{d}p_{j+l-d-1}-\sum_{d=0}^{l}dp_{d}p_{j+l-d-1}=
				\\
				&=\sum_{d=0}^{l-1}(j+l-2d-1)p_{d}p_{j+l-d-1}.
			\end{split}
		\end{equation}
		In addition, we can make the expression $\sum\limits_{d=0}^{j-1}p_{d}q_{j+l-d-1}$, which is the coefficient of $\frac{\partial^2}{\partial p_j\partial q_l}$ in $E$, symmetric with respect to the variables $p_j$ and $q_l$ in $\mathcal{A}$ since we have
		\begin{equation}
			\begin{split}
				\sum_{d=0}^{j-1}p_{d}q_{j+l-d-1}
				&=\sum_{d=0}^{j+l-1}p_{d}q_{j+l-d-1}-\sum_{d=j}^{j+l-1}p_{d}q_{j+l-d-1}=
				\\
				& =R_{j+l-1}-\sum_{d=0}^{l-1}p_{d}q_{j+l-d-1}\equiv-\sum_{d=0}^{l-1}p_{d}q_{j+l-d-1}\pmod I.
			\end{split}
		\end{equation}
		Thus, the coefficient of $\frac{\partial^2}{\partial p_j\partial q_l}$ in $E$ can be rewritten as
		\begin{equation*}
			\sum_{d=0}^{j-1}p_{d}q_{j+l-d-1}\equiv\frac{1}{2}\left(\sum_{d=0}^{j-1}p_{d}q_{j+l-d-1}-\sum_{d=0}^{l-1}p_{d}q_{j+l-d-1}\right)\pmod I.
		\end{equation*}
		We use this observations in the proof of Proposition \ref{EFH_operators}.
	\end{Rem}
	
	\begin{proof}[Proof of Proposition \ref{EFH_operators}]
		Denote differential operators defined in Proposition \ref{EFH_operators} which correspond to $E$, $F$ and $H$ as $D_e$, $D_f$ and $D_h$, respectively.
		Firstly, note that $D_e$, $D_f$ and $D_h$ are linear operators on the polynomial algebra $\mathbb{C}[p_1,\ldots,p_k,q_1,\ldots,q_{n-k}]$. 
		We will say that a differential operator $D$ on $\mathbb{C}[p_1,\ldots,p_k,q_1,\ldots,q_{n-k}]$ is zero modulo $I$ if all its coefficients are zero modulo $I$. In other words, $D$ is zero modulo $I$ if it can be represented as linear combination
		\begin{equation}
			\sum_{\alpha,\beta}a_{\alpha\beta}\cdot\prod_{j=1}^{k}\frac{\partial^{\alpha_j}}{\partial p_{j}^{\alpha_j}}\prod_{l=1}^{n-k}\frac{\partial^{\beta_l}}{\partial q_{l}^{\beta_j}},
		\end{equation}
		where $a_{\alpha\beta}\in I$ for all $\alpha=(\alpha_1,\ldots,\alpha_k)$ and $\beta=(\beta_1,\ldots,\beta_{n-k})$. In particular, if $D$ is zero modulo $I$, then its image is contained in $I$.
		The proof of Proposition \ref{EFH_operators} is divided into three steps.

		\textbf{Step 1.} We prove that differential operators $D_e$, $D_f$ and $D_h$ are well defined operators on the quotient $\mathcal{A}=\mathbb{C}[p_1,\ldots,p_k,q_1,\ldots,q_{n-k}]/I$.
		
		In order to prove that $D_e,D_f,D_h$ are well defined on the quotient $\mathbb{C}[p_1,\ldots,p_k,q_1,\ldots,q_{n-k}]/I$ we need to check that they preserve the ideal $I$. Clearly, for this it is sufficient to check that for all $m$ the commutators $[D_e,R_m]$, $[D_f,R_m]$ and $[D_h,R_m]$ are zero modulo $I$ (here we consider $R_m$ as a scalar multiplication operator). Indeed, in this case for any elements $f_m\in\mathbb{C}[p_1,\ldots,p_k,q_1,\ldots,q_{n-k}]$ we have
		\begin{equation}
			D_h\left(\sum_{m}f_m R_m\right)=\sum_{m}D_h(R_mf_m)=\sum_{m}\big([D_h,R_m](f_m)+R_m\cdot D_h(f_m)\big)\equiv\sum_{m}R_m\cdot D_h(f_{m})\equiv 0\pmod I 
		\end{equation}
		which implies that $D_h(I)\subset I$ (similarly for $D_e$ and $D_f$). 
		
		Now we proceed to the computation of these commutators. Since $D_f$ is a scalar operator, the equality $[D_f,R_m]=0$ is trivial. It remains to compute $[D_h,R_m]$ and $[D_f,R_m]$ (here we use Lemma \ref{commut_comp}).
		\begin{equation}
			\begin{split}
				[D_h,R_m]=
				&-2\sum_{j=1}^{k}\left[jp_j\frac{\partial}{\partial p_j},R_m\right]-2\sum_{j=1}^{n-k}\left[jq_j\frac{\partial}{\partial q_j},R_m\right]=-2\sum_{j=1}^{k}\left[jp_j\frac{\partial}{\partial p_j},p_jq_{m-j}\right]-
				\\
				&-2\sum_{j=1}^{n-k}\left[jq_j\frac{\partial}{\partial q_j},p_{m-j}q_{j}\right]=-2\sum_{j=1}^{k}jp_jq_{m-j}-2\sum_{j=1}^{n-k}jp_{m-j}q_{j}=
				\\
				&=-2\sum_{j=0}^{m}jp_jq_{m-j}-2\sum_{j=0}^{m}jp_{m-j}q_{j}=-2m\sum_{j=0}^{m}p_{j}q_{m-j}=-2m\cdot R_m\equiv 0\pmod I.
			\end{split}
		\end{equation}
		Similar computation for $[D_e,R_m]$ is more complicated. 
		Since $[D_f,R_m]=\sum\limits_{r+s=m}[D_f,p_rq_s]$ it is useful to compute $[D_f,p_rq_s]$ first. Lemma \ref{commut_comp} implies that
		\begin{equation}
			\begin{split}
				[D_e,p_rq_s]=
				&-\sum_{j=1}^{k}(k-j+1)(n-k+j-1)\delta_{jr}p_{j-1}q_s
				+\sum_{j=1}^{n-k}(k+j-1)(n-k-j+1)\delta_{js}p_rq_{j-1}+
				\\
				&+\sum_{j,l=1}^{k}\left(\sum_{d=0}^{j-1}(j+l-2d-1)p_{d}p_{j+l-d-1}\right)\left(\delta_{lr}q_s\frac{\partial}{\partial p_j}-\delta_{jr}q_s\frac{\partial}{\partial p_l}\right)-
				\\
				&-\sum_{j,l=1}^{n-k}\left(\sum_{d=0}^{j-1}(j+l-2d-1)q_dq_{j+l-d-1}\right)\left(\delta_{ls}p_r\frac{\partial}{\partial q_j}-\delta_{js}p_r\frac{\partial}{\partial q_l}\right)+
				\\
				&+2\sum_{j=1}^{k}\sum_{l=1}^{n-k}(j+l-1)\left(\sum_{d=0}^{j-1}p_{d}q_{j+l-d-1}\right)\left(\delta_{ls}p_r\frac{\partial}{\partial p_j}+\delta_{jr}q_s\frac{\partial}{\partial q_l}+\delta_{jr}\delta_{ls}\right).
			\end{split}
		\end{equation}
		Thus, modulo $I$ we have (here we use the symmetry mentioned in Remark \ref{jl_symmetry})
		\begin{equation*}
			\begin{split}
				[D_e,p_rq_s]=
				&-(k-r+1)(n-k+r-1)p_{r-1}q_s+(k+s-1)(n-k-s+1)p_{r}q_{s-1}+
				\\
				&+2\sum_{j=1}^{k}\sum_{d=0}^{j-1}(j+r-2d-1)p_{d}p_{j+r-d-1}q_s\frac{\partial}{\partial p_j}
				-2\sum_{l=1}^{n-k}\sum_{d=0}^{l-1}(l+s-2d-1)q_dq_{l+s-d-1}p_r\frac{\partial}{\partial q_l}+
				\\
				&+2\sum_{j=1}^{k}(j+s-1)\left(\sum_{d=0}^{j-1}p_{d}q_{j+s-d-1}\right)p_r\frac{\partial}{\partial p_j}
				-2\sum_{l=1}^{n-k}(r+l-1)\left(\sum_{d=0}^{l-1}p_{r+l-d-1}q_{d}\right)q_s\frac{\partial}{\partial q_l}+
				\\
				&+2(r+s-1)\sum_{d=0}^{r-1}p_{d}q_{r+s-d-1}.
			\end{split}
		\end{equation*}
		Here we used the fact that $\sum\limits_{d=0}^{r-1}p_{d}q_{r+l-d-1}=-\sum\limits_{d=0}^{l-1}p_{r+l-d-1}q_{d}$ modulo $I$.
		Rearranging the previous sum, we see that
		\begin{equation}
			[D_e,p_rq_s]=2\sum_{j=1}^{k}A^{(rs)}_j\frac{\partial}{\partial p_j}-2\sum_{l=1}^{n-k}B^{(rs)}_l\frac{\partial}{\partial q_l}+C^{(rs)},    
		\end{equation}
		where $A^{(rs)}_{j}$, $B^{(rs)}_{j}$ and $C^{(rs)}_{j}$ are defined as
		\begin{equation}
			\begin{aligned}
				A^{(rs)}_{j}&=\sum_{d=0}^{j-1}(j+r-2d-1)p_{d}p_{j+r-d-1}q_s+(j+s-1)\sum_{d=0}^{j-1}p_{d}q_{j+s-d-1}p_r, 
				\\
				B^{(rs)}_{j}&=\sum_{d=0}^{l-1}(l+s-2d-1)q_dq_{l+s-d-1}p_r+(r+l-1)\sum_{d=0}^{l-1}p_{r+l-d-1}q_{d}q_s, 
				\\
				C^{(rs)}&=-(k-r+1)(n-k+r-1)p_{r-1}q_s+(k+s-1)(n-k-s+1)p_{r}q_{s-1}+
				\\
				& +2(r+s-1)\sum_{d=0}^{r-1}p_{d}q_{r+s-d-1}.
			\end{aligned}    
		\end{equation}
		
		Now, in order to prove that $[D_e,R_m]\equiv 0\pmod I$, it suffices to check that
		\begin{equation}
			\sum_{r+s=m}A^{(rs)}_{j}\equiv 0,~\sum_{r+s=m}B^{(rs)}_{j}\equiv 0,~\sum_{r+s=m}C^{(rs)}\equiv 0 \pmod I.
		\end{equation}
		We compute the sum of $C^{(rs)}$ first. Note that
		\begin{equation}
			\begin{split}
				\sum_{r+s=m}C^{(rs)}=
				&-\sum_{r+s=m}(k-r+1)(n-k+r-1)p_{r-1}q_s+
				\\
				&+\sum_{r+s=m}(k+s-1)(n-k-s+1)p_{r}q_{s-1}+2(m-1)\sum_{d=0}^{r-1}p_{d}q_{m-d-1}.
			\end{split}
		\end{equation}
		Since
		\begin{equation}
			\begin{split}
				&-\sum_{r+s=m}(k-r+1)(n-k+r-1)p_{r-1}q_s+\sum_{r+s=m}(k+s-1)(n-k-s+1)p_{r}q_{s-1}=
				\\
				&=-\sum_{u+v=m-1}(k-u)(n-k+u)p_{u}q_{v}+\sum_{u+v=m-1}(k+v)(n-k-v)p_{u}q_{v}=
				\\
				&=\sum_{u+v=m-1}(n-2k+u-v)(u+v)p_uq_v=(m-1)\sum_{u+v=m-1}(n-2k+u-v)p_uq_v=
				\\
				&=(m-1)(n-2k)R_{m-1}+(m-1)\sum_{u+v=m-1}(u-v)p_uq_v\equiv(m-1)\sum_{u+v=m-1}(u-v)p_uq_v\pmod I
			\end{split}
		\end{equation}
		and
		\begin{equation}
			\sum_{r+s=m}\sum_{d=0}^{r-1}p_{d}q_{m-d-1}=\sum_{d=0}^{m-1}(m-d)p_dq_{m-d-1}=\sum_{u+v=m-1}(v+1)p_uq_v,
		\end{equation}
		we have
		\begin{equation}
			\begin{split}
				\sum_{r+s=m}C^{(rs)}
				&\equiv(m-1)\sum_{u+v=m-1}(u-v)p_uq_v+2(m-1)\sum_{u+v=m-1}(v+1)p_uq_v=
				\\
				&=(m-1)\sum_{u+v=m-1}(u+v+2)p_uq_v=(m^2-1)R_{m-1}\equiv 0\pmod I
			\end{split}
		\end{equation}
		for all $m\ge 1$.
		
		Next we check that $\sum\limits_{r+s=m}A^{(rs)}_{j}\equiv 0\pmod I$ for all $m\ge 1$ (the proof for $B^{(rs)}_{j}$ is similar). 
		Indeed, we have
		\begin{equation}\label{A_main}
			\sum_{r+s=m}A^{(rs)}_{j}=\sum_{r+s=m}\sum_{d=0}^{j-1}(j+r-2d-1)p_{d}p_{j+r-d-1}q_s+\sum_{r+s=m}(j+s-1)\sum_{d=0}^{j-1}p_{d}q_{j+s-d-1}p_r.
		\end{equation}
		Now note that modulo $I$ we have (here we make substitutions in summations: $u=s-l-1$, $v=m-s$, $w=l$)
		\begin{equation}\label{A_aux_1}
			\begin{split}
				&\sum_{r+s=m}(j+s-1)\sum_{d=0}^{j-1}p_{d}q_{j+s-d-1}p_r\equiv
				-\sum_{r+s=m}(j+s-1)\sum_{l=0}^{s-1}p_{j+s-l-1}q_{l}p_r=
				\\
				&=[u=s-l-1,~v=m-s,~w=l,~u+v+w=m-1,~u,v,w\ge 0]=
				\\
				&=-\sum_{u+v+w=m-1}(j+u+w)p_{j+u}p_{v}q_{w} \pmod I.
			\end{split}
		\end{equation}
		On the other hand, 
		\begin{equation}\label{A_aux_2}
			\begin{split}
				\sum_{r+s=m}\sum_{d=0}^{j-1}(j+r-2d-1)p_{d}p_{j+r-d-1}q_s
				&=\sum_{r+s=m}q_{s}\left(\sum_{d=0}^{j-1}(j+r-d-1)p_{d}p_{j+r-d-1}-\sum_{d=0}^{j-1}dp_dp_{j+r-d-1}\right)=
				\\
				&=\sum_{r+s=m}q_{s}\left(\sum_{d=r}^{j+r-1}dp_{d}p_{j+r-d-1}-\sum_{d=0}^{j-1}dp_dp_{j+r-d-1}\right)=
				\\
				&=\sum_{r+s=m}q_{s}\left(\sum_{d=j}^{j+r-1}dp_{d}p_{j+r-d-1}-\sum_{d=0}^{r-1}dp_dp_{j+r-d-1}\right)=
				\\
				&=\sum_{r+s=m}\sum_{d=0}^{r-1}(j+d)p_{j+d}q_{r-d-1}q_{s}-\sum_{r+s=m}\sum_{d=0}^{r-1}dp_{d}p_{j+r-d-1}q_{s}.
			\end{split}
		\end{equation}
		Now we can rearrange the last expression in a way similar to \eqref{A_aux_1}. Note that
		\begin{equation}\label{A_aux_3}
			\begin{split}
				\sum_{r+s=m}\sum_{d=0}^{r-1}(j+d)p_{j+d}q_{r-d-1}q_{s}
				&=[u=d,~v=s,~w=r-d-1,~u,v,w\ge 0]=
				\\
				&=[u+v+w=m-1]=\sum_{u+v+w=m-1}(j+u)p_{j+u}q_{v}p_{w}
			\end{split}
		\end{equation}
		and
		\begin{equation}\label{A_aux_4}
			\begin{split}
				\sum_{r+s=m}\sum_{d=0}^{r-1}dp_{d}p_{j+r-d-1}q_{s}
				&=[u=r-d-1,~v=d,~w=s,~u,v,w\ge 0]=
				\\
				&=[u+v+w=m-1]=\sum_{u+v+w=m-1}vp_{j+u}q_{v}p_{w}.
			\end{split}
		\end{equation}
		Finally, combining equalities \eqref{A_main} and $\eqref{A_aux_1}-\eqref{A_aux_4}$ we obtain
		\begin{equation}
			\begin{split}
				\sum_{r+s=m}A^{(rs)}_{j}\equiv
				&\sum_{u+v+w=m-1}p_{j+u}q_{v}p_{w}(j+u-v-j-u-w)=
				\\
				&=\sum_{u+v+w=m-1}p_{j+u}q_{v}p_{w}(-v-w)=
				\\
				&=\sum_{u=0}^{m-1}p_{j+u}(u-m+1)\sum_{v+w=m-1-u}p_{w}q_{v}=
				\\
				&=\sum_{u=0}^{m-1}(u-m+1)p_{j+u}\cdot R_{m-1-u}\equiv 0\pmod I,
			\end{split}
		\end{equation}
		as desired. The congruence $\sum\limits_{r+s=m}B^{(rs)}_{j}\equiv 0\pmod I$ can be checked in the same way.
		
		Thus, we proved that differential operators $[D_h,R_m]$, $[D_e,R_m]$ and $[D_f,R_m]$ are zero modulo $I$. Hence, $D_e$, $D_f$ and $D_h$ are well defined on the quotient $\mathcal{A}=\mathbb{C}[p_1,\ldots,p_k,q_1,\ldots,q_{n-k}]/I$.
		
		\textbf{Step 2.} We check that triple $\{D_e,D_f,D_h\}$ satisfy the $\mathfrak{sl}_2$-commutation relations \eqref{sl2_rel}. In other words, we need to check the following eqialities:
		\begin{equation}
			[D_e,D_f]=D_h,~[D_h,D_e]=2D_e,~[D_h,D_f]=-2D_f.
		\end{equation} 
		Commutators $[D_e,D_f]$ and $[D_h,D_f]$ can be easily computed directly. This is because $D_f$ is the scalar multiplication by $-p_1$, so we only need to take terms containing differentiation with respect to $p_1$ into account. Indeed, Lemma \ref{commut_comp} implies that
		\begin{equation*}
			[D_h,D_f]=\left[-2p_1\frac{\partial}{\partial p_1}, -p_1\right]=2p_1=2D_f~\text{and}
		\end{equation*}
		\begin{equation}
			\begin{split}
				[D_e,D_f]=
				&-[D_e,p_1]=\sum_{j=1}^{k}(k-j+1)(n-k+j-1)p_{j-1}\left[\frac{\partial}{\partial p_{j}},p_1\right]-
				\\
				&-\sum_{j,l=1}^{k}\left(\sum_{d=0}^{j-1}(j+l-2d-1)p_{d}p_{j+l-d-1}\right)\left[\frac{\partial^2}{\partial p_j \partial p_l},p_1\right]-
				\\
				&-2\sum_{j=1}^{k}\sum_{l=1}^{n-k}(j+l-1)\left(\sum_{d=0}^{j-1}p_{d}q_{j+l-d-1}\right)\left[\frac{\partial^2}{\partial p_j \partial q_l},p_1\right]=
				\\
				&=k(n-k)-2\sum_{l=1}^{k}lp_{l}\frac{\partial}{\partial p_{l}}-2\sum_{l=1}^{n-k}lq_{l}\frac{\partial}{\partial q_{l}}=D_h.
			\end{split}
		\end{equation}
		In order to prove that $[D_h,D_e]=2D_e$ let us make the following observation first. Lemma \ref{commut_comp} implies that for any $2n$ nonnegative integers $\alpha_1$, \ldots, $\alpha_k$, $\beta_1$, \ldots, $\beta_k$ and $\gamma_1$, \ldots, $\gamma_{n-k}$, $\delta_1$ \ldots, $\delta_{n-k}$ for the operator
		\begin{equation*}
			D=\left(\prod_{j=1}^{k}p_{j}^{\alpha_{j}}\prod_{l=1}^{n-k}q_{l}^{\gamma_{l}}\right)\cdot\prod_{j=1}^{k}\left(\frac{\partial}{\partial p_{j}}\right)^{\beta_{j}}\prod_{l=1}^{n-k}\left(\frac{\partial}{\partial q_{l}}\right)^{\delta_{l}}
		\end{equation*}
		we have
		\begin{equation}
			[D_h,D]=\left(\sum_{j=1}^{k}2j(\beta_{j}-\alpha_{j})+\sum_{l=1}^{n-k}2l(\delta_{l}-\gamma_{l})\right)\cdot D.
		\end{equation}
		Thus, in order to check the equality $[D_h,D_e]=2D_e$ it suffices to check that for each term of $D_e$ the value of $\sum\limits_{j=1}^{k}2j(\beta_{j}-\alpha_{j})+\sum\limits_{l=1}^{n-k}2l(\delta_{l}-\gamma_{l})$ equals $2$ which is clear from the definition of $D_e$ (see \eqref{EFH_diff_formulas}).
		
		\textbf{Step 3.} We prove that $D_e$, $D_f$ and $D_h$ coincide with operators $E$, $F$ and $H$ defined in Subsection 3.2. We will check this for $D_f$ and $D_h$ first. According to Proposition \ref{EFH_schubert} the action of $F$ corresponds to the multiplication by the negative of the first Chern class, i.e. by $-c_1(S)=-p_1$. Hence, the action of $F$ coincides with the action of $D_f=-p_1$. 
		
		In order to prove that operators $D_h$ and $H$ coincide it suffices to check that the image of any element $a\in\mathcal{A}$ of the degree $\deg(a)=d$ under the differential operator $D_h$ equals $H(a)=(-d+k(n-k))\cdot a$. Indeed, for any monomial $a=\prod_{j=1}^{k}p_j^{\alpha_j}\prod_{l=1}^{n-k}q_{l}^{\beta_l}$ we have
		\begin{equation}
			D_h(a)=D_h\left(\prod_{j=1}^{k}p_j^{\alpha_j}\prod_{l=1}^{n-k}q_{l}^{\beta_l}\right)=\left(-2\sum_{j=1}^{k}j\alpha_j-2\sum_{l=1}^{n-k}l\beta_l+k(n-k)\right)\cdot\left(\prod_{j=1}^{k}p_j^{\alpha_j}\prod_{l=1}^{n-k}q_{l}^{\beta_l}\right),
		\end{equation}
		so due to \eqref{degree} we have $D_h(a)=(-d+k(n-k))\cdot a=H(a)$, as desired. 
		
		Finally, in order to prove that $D_e$ and $E$ coincide we use the corollary of the Jacobson-Morozov theorem. Namely, we use the following fact: for given semisimple and nilpotent elements of the semisimple Lie algebra the third element which complements the first two to $\mathfrak{sl}_2$-triple, if exists, is unique. Now note that $\{E,F,H\}$ and $\{D_e,D_f,D_h\}$ are $\mathfrak{sl}_2$-triples. Therefore, equalities $D_h=H$ and $D_f=F$ imply $D_e=E$.
	\end{proof}

	\begin{Rem}
		It is evident that the essential part of the proof consisted of computations. Namely, the proof of Proposition \ref{EFH_operators} reduces to the computation of commutators $[D_e,R_m]$, $[D_f,R_m]$, $[D_h,R_m]$, $[D_e,D_f]$, $[D_h,D_e]$ and $[D_h,D_f]$.
	\end{Rem}

	\section{Construction of the operator $D_e$}
	
	In this section we outline the way in which we obtained the expression for the operator $D_e$. From the formal point of view, there this is unnecessary for the proof of Proposition \ref{EFH_operators} since we have already shown that $D_e$ corresponds to the action of $E$ on cohomology. However, it might be beneficial to describe the general approach which might be useful for similar problems.
	
	Suppose that we have a linear operator $L$ on polynomial algebra $\mathbb{C}[x_1,\ldots,x_m]$. We will show an algorithm which represents $T$ as a differential operator, i.e. in form
	\begin{equation}
		L=P\left(\frac{\partial}{\partial x_1},\ldots,\frac{\partial}{\partial x_m}\right),
	\end{equation}
	where $P(y_1,\ldots,y_m)$ is a polynomial in $m$ variables with coefficients in $\mathbb{C}[x_1,\ldots,x_m]$:
	\begin{equation}
		P(y_1,\ldots,y_m)=\sum_{k_1,\ldots,k_m}p_{k_1,\ldots,k_m}(x_1,\ldots,x_m)\prod_{j=1}^{m}y_j^{k_j}.
	\end{equation}
	In other words, our goal is to express $T$ in the form
	\begin{equation}
		L=\sum_{k_1,\ldots,k_m}p_{k_1,\ldots,k_m}(x_1,\ldots,x_m)\frac{\partial^{k_1+\ldots+k_m}}{\partial x_1^{k_1}\ldots\partial x_m^{k_m}}
	\end{equation}
	for some polynomials $p_{k_1,\ldots,k_m}\in\mathbb{C}[x_1,\ldots,x_m]$.
	\begin{Def}
		Let $\mathcal{S}$ be a complex commutative algebra with unit. An operator $L\colon\mathcal{S}\to\mathcal{S}$ is  called a differential operator of order 0 if $L=L_a$ (or simply $L=a$) for some $a\in S$ where $L_a$ is an operator of multiplication by $a$: $L_a(b)=a\cdot b$.
		An operator $L\colon\mathcal{S}\to\mathcal{S}$ is  called a differential operator of order $d\ge 1$ if for any $a\in S$ the commutator $[L,L_a]=[L,a]$ is a differential operator of order $d-1$ (or less). (Here we consider elements of $\mathcal{S}$ as corresponding scalar operators.)
	\end{Def}
	This definition is motivated by the polynomial algebra: if $\mathcal{S}=\mathbb{C}[x_1,\ldots,x_m]$, then this definition is equivalent to the standard definition of the differential operator of order $d$ in variables $x_1,\ldots,x_m$. 
	\begin{Rem}
		If $\mathcal{S}$ is generated by elements $\{a_j\}_{j\in A}$, then to check that $L$ is a differential operator of order $k$ it is sufficient to check that commutators $[L,a_{j}]$ is a differential operator. In particular, for $\mathcal{S}=\mathbb{C}[x_1,\ldots,x_m]$ we only need to compute commutators of the form $[L,x_j]$.
	\end{Rem}
	For the sake of simplicity we will consider only the case when $\mathcal{S}=\mathbb{C}[x_1,\ldots,x_m]$. However, everything also works in case when $\mathcal{S}$ is a quotient of polynomial a algebra.
	
	The ring of linear operators on $\mathbb{C}[x_1,\ldots,x_m]$ possess a natural structure of the Lie algebra in which the Lie bracket is the commutator $[\cdot,\cdot]$ of two linear operators. We will denote the operator $[\cdot,x]$ as $\Adj_x$.
	Next, note that for any $p(x_1,\ldots,x_m)\in\mathbb{C}[x_1,\ldots,x_m]$ and nonnegative integers $k_1,\ldots,k_m$
	\begin{equation}
		\Adj_{x_j}\left(p(x_1,\ldots,x_m)\frac{\partial^{k_1+\ldots+k_m}}{\partial x_1^{k_1}\ldots\partial x_m^{k_m}}\right)=k_{j}p(x_1,\ldots,x_m)\frac{\partial^{k_1+\ldots+k_m-1}}{\partial x_1^{k_1}\ldots\partial x_{j-1}^{k_{j-1}}\partial x_j^{k_j-1}\partial x_{j+1}^{k_{j+1}}\ldots\partial x_m^{k_m}}.
	\end{equation}
	Thus, for any polynomial $P$ in variables $y_1,\ldots,y_m$ with coefficients in $\mathbb{C}[x_1,\ldots,x_m]$ we have
	\begin{equation}
		\Adj_{x_j}\left(P\left(\frac{\partial}{\partial x_1},\ldots,\frac{\partial}{\partial x_m}\right)\right)=\frac{\partial P}{\partial y_{j}}\left(\frac{\partial}{\partial x_1},\ldots,\frac{\partial}{\partial x_m}\right),
	\end{equation}
	which implies that for any nonnegative integers $k_1,\ldots,k_n$ the following equality holds:
	\begin{equation}\label{adj_action_diff}
		\left(\Adj_{x_1}^{k_1}\circ\ldots\circ\Adj_{x_m}^{k_m}\right)\left(P\left(\frac{\partial}{\partial x_1},\ldots,\frac{\partial}{\partial x_m}\right)\right)=\left(\frac{\partial^{k_1+\ldots+k_m}P}{\partial y_1^{k_1}\ldots\partial y_m^{k_m}}\right)\left(\frac{\partial}{\partial x_1},\ldots,\frac{\partial}{\partial x_m}\right).
	\end{equation}
	The last formula provides a way to compute coefficients of a differential operator if its action on $\mathbb{C}[x_1,\ldots,x_m]$ is given. If $L$ is a differential operator of order $d$, then for any nonnegative numbers $\{k_j\}_{j=1}^{m}$ whose sum grater than $d$, we have
	\begin{equation}
		\left(\Adj_{x_1}^{k_1}\circ\ldots\circ\Adj_{x_m}^{k_m}\right)(L)=0.
	\end{equation}
	Similarly, for any nonnegative $\{k_j\}_{j=1}^{m}$ whose sum equals $d$ we have
	\begin{equation}\label{coeff_formula}
		p_{k_1,\ldots,k_m}(x_1,\ldots,x_m)=\frac{1}{k_{1}!\ldots k_{m}!}\left(\Adj_{x_1}^{k_1}\circ\ldots\circ\Adj_{x_m}^{k_m}\right)(L),
	\end{equation}
	where $p_{k_1,\ldots,k_m}(x_1,\ldots,x_m)$ is the coefficient of $\prod_{j=1}^{m}(\partial/\partial x_j)^{k_j}$ in $L$.
	In this way we can find the terms of order $d$ in $L$ if we know the action of $L$ on $\mathbb{C}[x_1,\ldots,x_m]$. After that we can subtract these terms from $L$ and then repeat this process. Therefore, the algorithm for finding the differential expression for $L$ is as follows:\begin{itemize}
		\item 
		Find the minimal $d\ge 0$ such that for all $j_1,\ldots,j_{j+1}$ all operators $(\Adj_{x_{j_1}}\circ\ldots\circ\Adj_{x_{j_{d+1}}})(L)$ are zero. If $L$ is indeed a differential operator, then such $d$ exists and equals to the order of $L$.
		
		\item
		Using formula \eqref{coeff_formula} find all terms of the (highest) order $d$ in $L$. After that subtract all these terms from $L$ and denote the resulting operator as $L'$.
		
		\item
		Now $L'$ is a differential operator whose order is at most $d-1$. Repeat the second part of the algorithm for $L'$.
		
		\item
		After $d$ steps we will find the differential expression for $L$ which is the sum of all terms found in the second part of the algorithm.
	\end{itemize}
	\begin{Rem}
		It is possible to find a closed formula for $L$ in terms of commutators $(\Adj_{x_{j_1}}\circ\ldots\circ\Adj_{x_{j_{n}}})(L)$ but it seems that such formula would be quite complicated and not suitable for any direct computations.
	\end{Rem}
	
	In our particular problem we knew the action of the operator $E$ on Schubert classes which form a basis of $H^*(\Gr(k,n),\mathbb{C})$. With the help of computer it was found out that for all $i$ and $j$ the operator $(\Adj_{x_{i}}\circ\Adj_{x_{j}})(E)$ is zero. The latter means that $E$ is an differential operator of the order at most 2.
	After that it remained to compute coefficients via the formula \eqref{coeff_formula}. In order to do that we also needed to express Schubert classes in terms of Chern classes. This was done via the correspondence from the Proposition \ref{symm_func_isom}, formula \eqref{chern_schubert} and classical formulas from the theory of symmetric functions.

	\section{Concluding remarks}
	
	In this section we give final remarks and discuss several questions related to our work which remain unanswered in this paper but still seem to be important. 
	
	Our description of $\mathrm{GL}_n$-structure on the cohomology ring of the complex Grassmannian is quite complicated and uses a nontrivial isomorphism. This raises the following question.
	\begin{Ques}
		Is it possible to describe the $\mathrm{GL}_n$-structure on $H^*(\Gr(k,n),\mathbb{C})$ directly, i.e without reference to the Schubert classes?
	\end{Ques}
	\noindent
	However, we have already seen that even the expression for $E$ is fairly complex, so it is unlikely that $\mathrm{GL}_n$-action on cohomology has a simple description. 
	
	Next, since $H$ and $F$ can be described in terms of cohomology it is natural to expect that the same is true for $E$. Nevertheless, it is not clear how the differential expression of $E$ can be translated to the language of cohomology.
	\begin{Ques}
		Does the operator $E$ have a geometrical meaning similar to $H$ and $F$ (see \eqref{HF_cohomology})?
	\end{Ques}
	\noindent
	We also could consider the action of $\{E,F,H\}$ on Schubert classes. Proposition \ref{EFH_schubert} suggests that action of $F$ on $\sigma_{\lambda}$ corresponds to the addition of one box to $\lambda$ in all possible ways. Similarly, action of $E$ corresponds to the deletion of boxes from $\lambda$ but the meaning of the coefficients $(k+\lambda_l-l)(n-k-\lambda_l+l)$ is unclear. It would be interesting to find a combinatorial interpretation of the action of $E$ on Schubert classes.
	
	Finally, our algebra $\mathcal{A}$ arose as the cohomology ring of the compact K\"{a}hler manifold (complex Grassmannian in our case). Such algebras possess special properties because of the Poincare duality and the hard Lefschetz theorems. There are similar objects in commutative algebra called the Artinian Gorenstein algebras (see \cite[Section 2]{Maeno-Watanabe} for the details). One particular class of examples of Artinian Gorenstein algebras consists of the quotients $\mathbb{C}[x_1,\ldots,x_n]/I$, where the ideal $I$ is generated by some polynomial $P\in\mathbb{C}[y_1,\ldots,y_n]$ in the following way
	\begin{equation}
		I=\operatorname{Ann} P,~\text{where}~\operatorname{Ann} P=\left\{f\in\mathbb{C}[x_1,\ldots,x_n]\colon f\left(\frac{\partial}{\partial x_1},\ldots,\frac{\partial}{\partial x_n}\right)P=0\right\}.
	\end{equation} 
	In this case the polynomial $P$ is called the cogenerator of the initial algebra (see also \cite[Theorem 2.1]{Maeno-Watanabe}). It can be checked that for our algebra $\mathcal{A}$ the cogenerator equals
	\begin{equation}
		P(x_1,\ldots,x_k,y_1,\ldots,y_{n-k})=\sum_{\alpha,\beta}\frac{1}{\alpha_1!\,\ldots\,\alpha_k!\,\beta_1!\,\ldots\,\beta_{n-k}!}\left(\prod_{j=1}^{k}c_j(S)^{\alpha_j}\prod_{l=1}^{n-k}c_{l}(Q)^{\beta_{l}}\right)\cdot x_1^{\alpha_1}\ldots x_k^{\alpha_k}\,y_1^{\beta_1}\ldots y_{n-k}^{\beta_{n-k}},
	\end{equation}
	where summation is over all nonnegative integers $\alpha_j$ and $\beta_l$ that satisfy 
	\begin{equation}
		\alpha_1+2\alpha_2+\ldots+k\alpha_k+\beta_1+2\beta_2+\ldots+(n-k)\beta_{n-k}=k(n-k).
	\end{equation}
	Thus, the ideal $I$ generated by elements $R_m\in\mathbb{C}[p_1,\ldots,p_k,q_1,\ldots,q_{n-k}]$ can be also represented as
	\begin{equation}
		I=\operatorname{Ann} P=\left\{f\in\mathbb{C}[p_1,\ldots,p_k,q_1,\ldots,q_{n-k}]\colon f\left(\frac{\partial}{\partial p_1},\ldots,\frac{\partial}{\partial p_k},\frac{\partial}{\partial q_1},\ldots\frac{\partial}{\partial q_{n-k}}\right)P=0\right\}.
	\end{equation}
	The proof of this fact is similar to the proof of the Theorem 2.1 in \cite{Kaveh}.
	\begin{Rem}
		Here we identify the one-dimensional top cohomology group $H^{k(n-k)}(\Gr(k,n),\mathbb{C})$ with $\mathbb{C}$. Hence, $P$ is defined up to nonzero scalar.
	\end{Rem}

	It appears that closed form for this polynomial $P$ is unknown as well as its connection to the complex Grassmannian, so we have the following question. 
	\begin{Ques}
		How does the polynomial $P$ relate to the complex Grassmannian?
	\end{Ques}
	The cogenerators are also related to the so called \textbf{volume polynomials}, see \cite[Section 4]{Kaveh}. We suspect that $P$ might be related to the hypersymplex $\Delta_{kn}$ which is the moment polytope of the Grassmannian with respect to the action of the maximal torus. Specifically, it seems that the cogenerator of the cohomology ring of $\Gr(k,n)$ coincides with the volume polynomial of the hypersimplex.

\end{document}